\title[Symmetric bases for FEEC spaces]{Symmetric Bases for Finite Element Exterior Calculus Spaces}
\author{Yakov Berchenko-Kogan}
\address{Department of Mathematical Sciences\\Florida Institute of Technology\\Melborune, FL}
\email{yberchenkokogan@fit.edu}
\keywords{finite element exterior calculus, differential forms, finite element method}
\subjclass[2020]{65N30, 58A10}
\newtheorem{theorem}{Theorem}[section]
\newtheorem{proposition}[theorem]{Proposition}
\newtheorem{corollary}[theorem]{Corollary}
\newtheorem{lemma}[theorem]{Lemma}
\theoremstyle{definition}
\newtheorem{definition}[theorem]{Definition}
\newtheorem{example}[theorem]{Example}
\newtheorem{remark}[theorem]{Remark}
\newcommand{\abs}[1]{\left\lvert{#1}\right\rvert}
\newcommand{\mb}{\mathbf}
\newcommand{\Z}{\mathbb Z}
\newcommand{\R}{\mathbb R}
\newcommand{\cP}{\mathcal P}
\newcommand{\oP}{\mathring{\mathcal P}}
\newcommand{\oV}{\mathring V}
\DeclareMathOperator{\Span}{span}
\DeclareMathOperator{\tr}{tr}
\DeclareMathOperator{\Ind}{Ind}
\DeclareMathOperator{\Res}{Res}
\begin{document}

\begin{abstract}
  In 2006, Arnold, Falk, and Winther developed finite element exterior calculus, using the language of differential forms to generalize the Lagrange, Raviart--Thomas, Brezzi--Douglas--Marini, and N\'ed\'elec finite element spaces for simplicial triangulations. In a recent paper, Licht asks whether, on a single simplex, one can construct bases for these spaces that are invariant with respect to permuting the vertices of the simplex. For scalar fields, standard bases all have this symmetry property, but for vector fields, this question is more complicated: such invariant bases may or may not exist, depending on the polynomial degree of the element.

  In dimensions two and three, Licht constructs such invariant bases for certain values of the polynomial degree $r$, and he conjectures that his list is complete, that is, that no such basis exists for other values of $r$. In this paper, we show that Licht's conjecture is true in dimension two. However, in dimension three, we show that Licht's ideas can be extended to give invariant bases for many more values of $r$; we then show that this new larger list is complete. Along the way, we develop a more general framework for the geometric decomposition ideas of Arnold, Falk, and Winther.
\end{abstract}

\maketitle

\section{Introduction}

As a starting example, consider the space of quadratic real-valued functions on a tetrahedron $T$. Via barycentric coordinates, we can map $T$ onto the standard simplex $T^3\subset\R^4$ consisting of all nonnegative quadruplets $(\lambda_0,\lambda_1,\lambda_2,\lambda_3)$ satisfying $\lambda_0+\lambda_1+\lambda_2+\lambda_3=1$. Here, we have a natural basis for the $10$-dimensional space of quadratic polynomials:
\begin{equation}\label{eq:symmetricbasis}
  \{\lambda_0^2,\lambda_1^2,\lambda_2^2,\lambda_3^2,\lambda_0\lambda_1,\lambda_0\lambda_2,\lambda_0\lambda_3,\lambda_1\lambda_2,\lambda_1\lambda_3,\lambda_2\lambda_3\}.
\end{equation}
We immediately see that if we permute the coordinates $\{\lambda_0,\lambda_1,\lambda_2,\lambda_3\}$, this basis is mapped to itself. Equivalently, this basis is invariant with respect to isometries $T^3\to T^3$ that permute the vertices of the tetrahedron.

This symmetry invariance is convenient. For example, suppose we wanted to compute the matrix of the bilinear map $a(u, v):=\int_{T^3}\nabla u\cdot\nabla v\,d\mathrm{vol}$ with respect to this basis. Naively, we would have to do $100$ computations, or $55$ if we take into account the fact that $a(u, v) = a(v, u)$. However, if we also take into account the permutation symmetry, it suffices to do just $7$ computations:
\begin{multline}\label{eq:7pairs}
  a(\lambda_0^2,\lambda_0^2),\quad a(\lambda_0^2,\lambda_1^2),\quad a(\lambda_0^2,\lambda_0\lambda_1),\quad a(\lambda_0^2,\lambda_1\lambda_2),\\
  a(\lambda_0\lambda_1,\lambda_0\lambda_1),\quad a(\lambda_0\lambda_1,\lambda_0\lambda_2),\quad a(\lambda_0\lambda_1,\lambda_2\lambda_3).
\end{multline}
All other computations can be handled by symmetry. For instance, $a(\lambda_2^2,\lambda_0\lambda_2)=a(\lambda_0^2,\lambda_0\lambda_1)$ via a permutation that sends $\lambda_2$ to $\lambda_0$ and $\lambda_0$ to $\lambda_1$.

In this example, our domain was a regular tetrahedron $T^3$ for simplicity, but we note here that we can exploit symmetry to compute forms such as $a(u, v)=\int_T\nabla u\cdot\nabla v\,d\mathrm{vol}(T)$ even on an arbitrary tetrahedron $T$. A brief sketch of the procedure is as follows: On the reference simplex $T^3$, we compute $A(u, v) := \int_{T^3}du\otimes dv\,d\mathrm{vol}(T^3)=:\sum_{i,j}A_{ij}(u, v)\,d\lambda_i\otimes d\lambda_j$ for the $7$ pairs $(u, v)$ in \eqref{eq:7pairs}; the values of $a(u, v)$ for all $55$ pairs can then be quickly computed from row/column permutations of the $A_{ij}$ and the Jacobian matrix of the affine barycentric coordinate transformation sending $T^3$ to $T$.

For scalar functions, the symmetric basis construction in \eqref{eq:symmetricbasis} works in all dimensions and all polynomial degrees. However, for vector fields, the situation is more complicated. In a recent paper \cite{li19}, Licht attacks the question of when such symmetric bases exist for the Raviart--Thomas elements \cite{rt77}, the Brezzi--Douglas--Marini elements \cite{bdm85}, and the N\'ed\'elec elements \cite{n80, n86}, using the general framework of finite element exterior calculus \cite{a13, afw06, afw09, afw10}. Henceforth in this paper, we will also adopt the framework of finite element exterior calculus, but, for reference, we list the correspondences between the classical finite element spaces and the finite element exterior calculus spaces in Table~\ref{tab:classicalfeec}.

\begin{table}
  \centering
  \begin{tabular}{ll}
    FEEC space&classical counterpart\\\hline
    $\cP_r\Lambda^0(T^n)=\cP_r^-\Lambda^0(T^n)$&Lagrange elements\\
    $\cP_r\Lambda^n(T^n)=\cP_{r+1}^-\Lambda^n(T^n)$&discontinuous elements\\
    $\cP_r\Lambda^1(T^2)$&Brezzi--Douglas--Marini $H(\operatorname{div})$ elements\\
    $\cP_r^-\Lambda^1(T^2)$&Raviart--Thomas $H(\operatorname{div})$ elements\\
    $\cP_r\Lambda^1(T^3)$&N\'ed\'elec $H(\operatorname{curl})$ elements of the second kind\\
    $\cP_r^-\Lambda^1(T^3)$&N\'ed\'elec $H(\operatorname{curl})$ elements of the first kind\\
    $\cP_r\Lambda^2(T^3)$&N\'ed\'elec $H(\operatorname{div})$ elements of the second kind\\
    $\cP_r^-\Lambda^2(T^3)$&N\'ed\'elec $H(\operatorname{div})$ elements of the first kind\\
  \end{tabular}
  \vspace\baselineskip
  \caption{A summary of \cite[Tables 5.1 and 5.2]{afw06} listing the correspondences between the finite element exterior calculus spaces and the classical finite element spaces; here $T^n$ denotes the $n$-dimensional simplex. See also \cite[Tables 2.1 and 2.2]{afw06}.}
  \label{tab:classicalfeec}
\end{table}

In the above example with scalar fields, we saw that our basis of $\cP_2\Lambda^0(T^3)$ was invariant with respect to the permutation group $S_4$ that permutes the vertices of the simplex. However, in both Licht's paper \cite{li19} and our paper, we want to weaken the notion of invariance to allow basis elements to map to other basis elements \emph{up to sign}. We can motivate this definition by considering the space $\cP_2\Lambda^3(T^3)$ consisting of densities with quadratic coefficients. Initially, it may seem that the situation is no different from the scalar field situation; we can just use the same basis multiplied by the volume form:
\begin{multline*}
  \{\lambda_0^2\,d\mathrm{vol},\lambda_1^2\,d\mathrm{vol},\lambda_2^2\,d\mathrm{vol},\lambda_3^2\,d\mathrm{vol},\\
  \lambda_0\lambda_1\,d\mathrm{vol},\lambda_0\lambda_2\,d\mathrm{vol},\lambda_0\lambda_3\,d\mathrm{vol},\lambda_1\lambda_2\,d\mathrm{vol},\lambda_1\lambda_3\,d\mathrm{vol},\lambda_2\lambda_3\,d\mathrm{vol}\}.
\end{multline*}
However, whereas the volume form is invariant under orientation-preserving isometries of $T^3$, it will change sign if we apply an orientation-reversing isometry. Thus, if we permute the vertices of $T^3$ with an odd permutation, we will send $d\mathrm{vol}$ to $-d\mathrm{vol}$ and, therefore, send each basis element to the \emph{negative} of another basis element.

This issue is pervasive: Even if we were to try to restrict to orientation-preserving isometries of $T^3$, such isometries can still reverse the orientation of edges, and so we will have similar issues with bases of one-forms. On the other hand, there is not much downside if basis elements can map to negatives of other basis elements: We must keep track of signs, but otherwise we have the same kind of advantages as in our example of computing the matrix of $a(u, v)=\int_{T^3}\nabla u\cdot\nabla v\,d\mathrm{vol}$ above. Thus, we establish the following definition.

\begin{definition}\label{def:invariantuptosign}
  Let $V$ be a $G$-representation, that is, a (real) vector space with an action of a group $G$. We say that a basis $\xi_1,\dotsc,\xi_n$ is a \emph{$G$-invariant basis up to sign} if for every $g\in G$ and every $i$, we have that $g\xi_i=\pm\xi_{i'}$ for some $i'$.
\end{definition}

\begin{remark}
  Invariance up to sign is the only notion of basis invariance discussed in this paper, so we will often simply write \emph{$G$-invariant basis}. The group $G$ will also be omitted if it is clear from context. We also remark that Licht \cite{li19} uses different terminology, using the term $\R$-invariant basis to denote a basis that is invariant up to sign, in order to distinguish from $\mathbb C$-invariance, which we do not discuss in the present paper.
\end{remark}

\begin{remark}
  Representations that have a $G$-invariant basis up to sign are sometimes referred to as \emph{monomial representations}.
\end{remark}

Another motivation comes from Whitney forms \cite{wh57}. In our notation, the \emph{Whitney forms} are the space $\cP_1^-\Lambda^k(T^n)$, the lowest-order incarnation of the $\cP^-_r\Lambda^k(T^n)$ spaces. The main property of Whitney $k$-forms is that they are uniquely determined by their integrals on $k$-dimensional faces. As such, they have a natural basis, the \emph{Whitney elementary forms}: a Whitney elementary form has integral one on a particular $k$-dimensional face $F$ of $T^n$ and integral zero on all the others. One can check that this basis is invariant, but only up to sign, due to the fact that isometries of $T^n$ take $k$-dimensional faces to $k$-dimensional faces, but may reverse their orientations. From this perspective, our question is: Does this symmetry property continue to hold for the higher-order analogues $\cP_r^-\Lambda^k(T^n)$ of the Whitney forms? What about the other family, the $\cP_r\Lambda^k(T^n)$ spaces? The Whitney forms enjoy many nice geometric properties \cite{hi03}, such as a geometric interpretation of the exterior derivative; the symmetry property addressed in this paper appears necessary for those geometric properties to persist in the generalization to the higher-order forms of finite element exterior calculus.

\subsection*{Main results} Each of the finite element spaces $\cP_r\Lambda^k(T^n)$ and $\cP_r^-\Lambda^k(T^n)$ has an action of the symmetric group $S_{n+1}$ that permutes the $n+1$ vertices of the simplex $T^n$. Our question then is: Which of these spaces have invariant bases up to sign, and which do not? Licht \cite{li19} gives a partial answer to this question in dimensions two and three, constructing invariant bases for certain values of $r$ (depending on the family). In this paper, we develop a more general theory that lets us give a complete answer in dimensions two and three (and could, with some computation, be applied to higher dimensions as well). In two dimensions, we show that, for the spaces where Licht did not find invariant bases, there are indeed no such bases, thereby showing that Licht's list in complete. On the other hand, in three dimensions, we find many additional spaces that have invariant bases; we then show that our new expanded list is complete.

\subsection*{Techniques} In both Licht's work \cite{li19} and in the current paper, we induct using the \emph{geometric decomposition} and \emph{duality} properties of the finite element exterior calculus spaces. The geometric decomposition allows us to decompose finite element spaces on $T^n$ in terms of finite element spaces on the faces $F$ of $T^n$ that vanish on the boundary $\partial F$. Meanwhile, duality gives an isomorphism between spaces of forms that vanish on the boundary and spaces of forms of lower degree (with no boundary conditions). Together, these two facts allow us to inductively construct invariant bases. The main innovations in this paper that allow us to prove the stronger results are the following.
\begin{itemize}
\item We give a representation-theoretic obstruction to the existence of invariant bases.
\item We develop a more thorough understanding of the representation theory of the geometric decomposition. %, allowing us to use it to not only construct but also obstruct invariant bases.
\item In three dimensions, we expand the base case of the inductive construction, allowing us to construct invariant bases for more spaces.
\end{itemize}

\subsection*{Outline} Section~\ref{sec:preliminaries} is devoted to preliminaries: We define the finite element exterior calculus spaces and how the symmetric group acts on them, and we introduce the results we will need from representation theory. In Section~\ref{sec:geodecomp}, we develop a generalization of the geometric decomposition theory of Arnold, Falk, and Winther \cite{afw09} that is well-suited for easily understanding the representation theory of these spaces. We relegate most of the discussion of the correspondence between these two notions of geometric decomposition to Appendix~\ref{sec:extension}, as it does not impact our results. The results themselves are in Section~\ref{sec:results}.

\section{Preliminaries}\label{sec:preliminaries}
This preliminaries section is split into four subsections. The first three are devoted to representation theory, with the general theory in Subsection~\ref{subsec:reptheory}, the theory of induced representations in Subsection~\ref{subsec:induced}, and relevant examples in Subsection~\ref{subsec:repexamples}, relying on the references \cite{cure62, fuha91}; see also \cite{se77}. Meanwhile, in Subsection~\ref{subsec:feec}, we briefly introduce the finite element spaces $\cP_r\Lambda^k(T^n)$ and $\cP_r^-\Lambda^k(T^n)$ and discuss how the symmetric group $S_{n+1}$ acts on them; this subsection culminates with the duality isomorphism that, along with the geometric decomposition, is the main ingredient of the inductive basis construction.

\subsection{Representation theory}\label{subsec:reptheory}
While there are many equivalent perspectives on representation theory, in this paper we will view a representation as a vector space endowed with additional structure. We briefly review the key definitions and theorems we will use in this paper. For this paper, all vector spaces are over the real numbers, though the results in this section hold more generally for fields of characteristic zero.

\begin{definition}
  Given a group $G$, a \emph{$G$-representation} or simply \emph{representation} $V$ is a vector space endowed with a product $G\times V\to V$ denoted $(g, v)\mapsto gv$, such that
  \begin{itemize}
  \item for all $g\in G$ the induced map $V\to V$ defined by $v\mapsto gv$ is linear, and
  \item $g(hv)=(gh)v$ for all $g,h\in G$ and $v\in V$.
  \end{itemize}
\end{definition}

\begin{definition}\label{def:trivial}
  Any group $G$ has a trivial one-dimensional representation, which we denote $\mb1$. The action is $gv=v$.
\end{definition}

\begin{proposition}\label{prop:1invariant}
  The trivial representation $\mb1$ has an $G$-invariant basis.
\end{proposition}

\begin{proof}
  Let $v$ be a nonzero element of $\mb1$. Then $\{v\}$ is an invariant basis.
\end{proof}

Representations have a stronger notion of isomorphism than vector spaces do.

\begin{definition}
  Given two representations $V$ and $W$, a linear map $f\colon V\to W$ is \emph{equivariant} if $f(gv)=g(f(v))$. If, moreover, $f$ is a bijection, then we say that $V$ and $W$ are \emph{isomorphic} representations.
\end{definition}

\begin{definition}
  If $V$ is a $G$-representation and $W$ is a linear subspace of $V$, then $W$ is an \emph{invariant subspace} or \emph{subrepresentation} if $gW=W$ for all $g\in G$. If so, $W$ itself has the structure of a $G$-representation.

  If the only subrepresentations of $V$ are $0$ and $V$, then $V$ is \emph{irreducible}.
\end{definition}

Given two $G$-representations $V$ and $W$, the vector spaces $V\oplus W$ and $V\otimes W$ inherit a canonical action of $G$.
\begin{definition}
  Let $V$ and $W$ be two $G$-representations.

  The \emph{direct sum representation}, denoted $V\oplus W$, is defined on the vector space $V\oplus W$ by $g(v,w):=(gv,gw)$ for all elements $(v,w)\in V\oplus W$. For a nonnegative integer $n$, we will let $nV$ denote the direct sum of $n$ copies of $V$.

  The \emph{tensor product representation}, denoted $V\otimes W$, is defined on the vector space $V\otimes W$ by $g(v\otimes w)=gv\otimes gw$ for all $v\in V$ and $w\in W$.
\end{definition}

We observe here that these operations preserve the property of having invariant bases.
\begin{proposition}\label{prop:sumtensor}
  Let $V$ and $W$ be $G$-representations. If $V$ and $W$ have $G$-invariant bases up to sign, then so do $V\oplus W$ and $V\otimes W$.
\end{proposition}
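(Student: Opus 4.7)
The plan is to handle the two cases separately, as each admits a very natural choice of basis.

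For the direct sum $V\oplus W$, I would start with $G$-invariant-up-to-sign bases $\xi_1,\dotsc,\xi_m$ of $V$ and $\eta_1,\dotsc,\eta_n$ of $W$, and propose the basis $\xi_1,\dotsc,\xi_m,\eta_1,\dotsc,\eta_n$ for $V\oplus W$ (viewing each $\xi_i$ as $(\xi_i,0)$ and each $\eta_j$ as $(0,\eta_j)$). The key observation is that the $G$-action on $V\oplus W$ respects the direct sum decomposition: for $g\in G$ we have $g(v,w)=(gv,gw)$, so each summand is preserved. Therefore $g\xi_i=\pm\xi_{i'}$ in $V$ translates to the same identity in $V\oplus W$, and likewise for the $\eta_j$. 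The combined list is thus invariant up to sign.

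For the tensor product $V\otimes W$, I would propose the basis $\{\xi_i\otimes\eta_j\}$ indexed by pairs $(i,j)$. This is a standard basis for the tensor product as a vector space. For invariance, recall that $G$ acts on $V\otimes W$ diagonally, that is, $g(v\otimes w)=(gv)\otimes(gw)$, extended linearly. Thus
\begin{equation*}
  g(\xi_i\otimes\eta_j)=(g\xi_i)\otimes(g\eta_j)=(\pm\xi_{i'})\otimes(\pm\eta_{j'})=\pm(\xi_{i'}\otimes\eta_{j'}),
\end{equation*}
so the basis $\{\xi_i\otimes\eta_j\}$ is again invariant up to sign, as the product of two signs is still a sign.

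There is no real obstacle here; the proposition is essentially a bookkeeping statement about how monomial representations are closed under direct sum and tensor product. The only subtle point worth flagging is that the proof relies on the standard fact that the $G$-action on $V\otimes W$ is the diagonal action, so that signs multiply coordinatewise; this is what keeps the resulting basis vector a scalar multiple (in fact $\pm1$ multiple) of another basis vector rather than a genuine linear combination.
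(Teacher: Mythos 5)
Your proof is correct and matches the paper's argument essentially verbatim: the same concatenated basis for $V\oplus W$, the same product basis $\{\xi_i\otimes\eta_j\}$ for $V\otimes W$, and the same sign computation $g(\xi_i\otimes\eta_j)=(\pm\xi_{i'})\otimes(\pm\eta_{j'})=\pm(\xi_{i'}\otimes\eta_{j'})$. Nothing to add.
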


\begin{proof}
  Let $\xi_1,\dotsc,\xi_m$ and $\eta_1,\dotsc,\eta_n$ be invariant bases for $V$ and $W$, respectively. Then one can see that $\xi_1,\dotsc,\xi_m,\eta_1,\dotsc,\eta_n$ is an invariant basis for $V\oplus W$. Likewise, $\{\xi_i\otimes\eta_j\}$ is an invariant basis of $V\otimes W$. Indeed, we know that $g\xi_i=\pm\xi_{i'}$ for some $i'$ and $g\eta_j=\pm\eta_{j'}$ for some $j'$, so
  \begin{equation*}
    g(\xi_i\otimes\eta_j)=(g\xi_i)\otimes(g\eta_j)=(\pm\xi_{i'})\otimes(\pm\eta_{j'})=\pm(\xi_{i'}\otimes\eta_{j'}),
  \end{equation*}
  as desired.
\end{proof}

A key result of the representation theory of finite (or, more generally, compact) groups is the unique decomposition into irreducibles.

\begin{theorem}[Maschke]
  If $G$ is a finite group, then any finite-dimensional representation $V$ decomposes as a direct sum of irreducible subrepresentations.
  \begin{equation*}
    V\cong V_1\oplus\dotsb\oplus V_k.
  \end{equation*}
\end{theorem}

Since every finite group has finitely many irreducible representations up to isomorphism, Maschke's theorem yields a powerful method for assessing which representations satisfy a representation-theoretic property such as having an invariant basis:
\begin{itemize}
\item Identify all of the irreducible representations of $G$.
\item For each possible direct sum of these representations, determine whether the property holds.
\item For a given representation $V$, compute its decomposition into irreducibles. Based on this decomposition and the results of the previous step, conclude whether the property holds.
\end{itemize}

There is a straightforward way of identifying the irreducible representations of $G$ using the regular representation.

\begin{definition}\label{def:regular}
  Any finite group $G$ has a representation called the \emph{regular representation}, denoted $\R[G]$. As a vector space, $\R[G]$ is the space of formal linear combinations $\sum_{g_i\in G}a_ig_i$ of elements of $G$. The group action is
  \begin{equation*}
    g\sum_{g_i\in G}a_ig_i:=\sum_{g_i\in G}a_i(gg_i).
  \end{equation*}
\end{definition}

An important property of the regular representation is that it contains all irreducible representations.
\begin{proposition}\label{prop:irreducibleregular}
  Any irreducible representation of a group $G$ is a subrepresentation of the regular representation of $G$.
\end{proposition}
This allows us to identify all irreducible representations of a group $G$ by decomposing the regular representation into irreducibles. All irreducible representations of $G$ will be present in this decomposition (generally with multiplicity).

\begin{definition}
  Given a $G$-representation $V$ and a subgroup $H$ of $G$, we can also view $V$ as an $H$-representation by ``forgetting'' the additional structure of multiplication by elements of $g$ that are not in $H$. This representation is called the \emph{restriction} of the $G$-representation $V$ to $H$. When there is no risk of ambiguity, we will refer to this restricted $H$-representation simply as $V$. When we must disambiguate, we will refer to it as $\Res^G_H(V)$.
\end{definition}

Less trivially, given an $H$-representation $W$, we can construct a canonical $G$-representation $V$; this construction is the subject of the next subsection.

\subsection{Induced representations}\label{subsec:induced}
It turns out that we can characterize the $G$-representations that admit $G$-invariant bases up to sign using the theory of \emph{induced representations}. We give a brief introduction, largely following \cite[Subsection 3.3]{fuha91}.

\begin{definition}\label{def:induce}
  Let $V$ be a $G$-representation. Let $W$ be a subspace of $V$ that is invariant under the action of a subgroup $H$ of $G$. Then the translate $gW$ depends only on the left coset $gH\in G/H$, so it makes sense to write $\sigma W$ for $\sigma\in G/H$. If it so happens that $V$ is the direct sum of these translates, that is
  \begin{equation*}
    V=\bigoplus_{\sigma\in G/H}\sigma W,
  \end{equation*}
  then we say that $V$ is \emph{induced} by $W$ and write $V=\Ind_H^GW$.
\end{definition}

\begin{proposition}[{\cite[Subsection 3.3]{fuha91}}]
  The induced representation is unique. That is, the isomorphism class of $V=\Ind_H^GW$ as a $G$-representation depends only on the isomorphism class of $W$ as an $H$-representation.
\end{proposition}

\begin{proposition}[{\cite[Example 3.15]{fuha91}}]\label{prop:inducesum}
  Induction respects direct sums, that is,
  \begin{equation*}
    \Ind_H^G\bigoplus W_i\cong\bigoplus\Ind_H^GW_i.
  \end{equation*}
\end{proposition}

\begin{proposition}[{\cite[Exercise 3.16(b)]{fuha91}}]\label{prop:inducetrans}
  Induction is transitive, that is, if $H$ is a subgroup of $K$, which is a subgroup of $G$, and $W$ is an $H$-representation, then
  \begin{equation*}
    \Ind_H^GW\cong\Ind_K^G\Ind_H^KW.
  \end{equation*}
\end{proposition}

\begin{proposition}[{\cite[Section 12D]{cure62}}]\label{prop:induceregular}
  Induction sends regular representations to regular representations.
  \begin{equation*}
    \Ind_H^G\R[H]\cong\R[G].
  \end{equation*}
\end{proposition}

In particular, if $H$ is the trivial subgroup, we have the following.
\begin{corollary}\label{cor:inducetrivial}
  Let $e$ be the trivial subgroup of $G$, and let $\mb 1=\R[e]$ be the trivial $e$-representation. Then
  \begin{equation*}
    \Ind_e^G\mb 1\cong\R[G].
  \end{equation*}
\end{corollary}

Finally, we have the following property relating induction, restriction, and the tensor product.
\begin{proposition}[{\cite[Exercise 3.16(a)]{fuha91}}]\label{prop:inducetensor}
  Let $H$ be a subgroup of a group $G$, let $V$ be a $G$-representation, and let $W$ be an $H$-representation. Then
  \begin{equation*}
    V\otimes\Ind_H^GW\cong\Ind_H^G\left(\Res_H^G(V)\otimes W\right).
  \end{equation*}
\end{proposition}

\begin{corollary}\label{cor:tensorregular}
  Tensoring a representation $V$ with the regular representation yields the direct sum of $\dim V$ copies of the regular representation.
  \begin{equation*}
    V\otimes\R[G]\cong(\dim V)\,\R[G].
  \end{equation*}
\end{corollary}

\begin{proof}
  In Proposition~\ref{prop:inducetensor}, let $H$ be the trivial group $e$ and let $W$ be the trivial $e$-representation $\mb 1$. Then the left-hand side of Proposition~\ref{prop:inducetensor} yields
  \begin{equation*}
    V\otimes\Ind_e^G\mb 1\cong V\otimes\R[G]
  \end{equation*}
  by Corollary~\ref{cor:inducetrivial}. Meanwhile, to compute the right-hand side, first observe that if we view $V$ as an $e$-representation, it is simply a vector space with no additional structure, and therefore its decomposition as an $e$-representation is
  \begin{equation*}
    \Res_e^G(V)\cong\mb 1\oplus\dotsb\oplus\mb 1=(\dim V)\,\mb 1.
  \end{equation*}
  Thus, using Proposition~\ref{prop:inducesum} and Corollary~\ref{cor:inducetrivial}, we have
  \begin{multline*}
    \Ind_e^G\left(\Res_e^G(V)\otimes\mb 1\right)\cong\Ind_e^G\left(\Res_e^G(V)\right)\cong\Ind_e^G\left((\dim V)\,\mb 1\right)\\
    \cong(\dim V)\,\Ind_e^G\mb 1\cong(\dim V)\,\R[G].\qedhere
  \end{multline*}

\end{proof}

We can rephrase Definition~\ref{def:invariantuptosign} in terms of induced representations. The intuition is that a basis is equivalent to a decomposition of a vector space into a direct sum of lines, that is, one-dimensional subspaces. With this perspective, a basis being invariant up to sign means that the group action permutes these lines. Focusing on a single orbit of this action, the direct sum of the lines in this orbit is precisely the definition of a representation induced by one of the lines. Summing over the orbits gives the following characterization.

\begin{proposition}[{\cite[page 347]{cure62}}]\label{prop:invariantinduced}
  Let $G$ be a finite group. A $G$-representation $V$ has a $G$-invariant basis up to sign if and only if it is the direct sum of $G$-representations that are induced by one-dimensional representations, that is,
  \begin{equation*}
    V=\bigoplus_i\Ind_{H_i}^GL_i,
  \end{equation*}
  where each $H_i$ is a subgroup of $G$ and each $L_i$ is a one-dimensional $H_i$-representation.
\end{proposition}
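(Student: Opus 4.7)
The plan is to prove both directions of the biconditional directly from the definitions of induction and of an invariant basis up to sign.

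For the ``if'' direction, I will assume $V = \bigoplus_i \Ind_{H_i}^G L_i$ and build an explicit invariant basis. The first observation is that, because $G$ is finite and we work over $\R$, every element of $H_i$ acts on the one-dimensional real space $L_i$ by a real root of unity, and hence by $\pm 1$. I then pick a generator $\ell_i \in L_i$ and a system of left coset representatives $\{g_\sigma\}_{\sigma \in G/H_i}$. By the definition of induction, the collection $\{g_\sigma \ell_i\}$, as $i$ and $\sigma$ vary, forms a basis of $V$. To check invariance up to sign, for $g \in G$ I factor $g g_\sigma = g_{\sigma'} h$ with $h \in H_i$ and compute
\[g(g_\sigma \ell_i) = g_{\sigma'}(h \ell_i) = \pm g_{\sigma'} \ell_i,\]
so the basis is indeed permuted up to sign.

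For the ``only if'' direction, I suppose $V$ has a $G$-invariant basis $\xi_1, \ldots, \xi_n$ up to sign. The key observation is that, while $G$ acts on the basis vectors only up to sign, it acts honestly on the finite set of lines $\{\R \xi_1, \ldots, \R \xi_n\}$, since signs disappear when passing from vectors to the lines they span. I partition this set of lines into $G$-orbits. For each orbit, I pick a representative $L = \R \xi_i$ with stabilizer $H \leq G$; this $L$ is a one-dimensional $H$-subrepresentation of $V$, and by orbit-stabilizer the orbit itself is $\{\sigma L\}_{\sigma \in G/H}$. The direct sum of the lines in the orbit is then a $G$-subrepresentation of $V$ equal to $\bigoplus_{\sigma \in G/H} \sigma L = \Ind_H^G L$. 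Summing over orbits, and using that the lines $\R\xi_i$ span $V$ with no overlap, produces the required decomposition.

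I do not expect any substantial obstacle here; the only subtle point is the observation in the first direction that a real one-dimensional representation of a finite group must act by $\pm 1$, which is what ensures the induced basis is invariant up to sign rather than merely up to scalar.
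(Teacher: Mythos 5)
Your proof is correct and follows the same approach the paper sketches informally before the statement (the paper itself defers to the Curtis--Reiner citation and only gives the ``only if'' intuition: lines are permuted, orbits give induced representations). You usefully also spell out the ``if'' direction, including the key observation that a one-dimensional real representation of a finite group necessarily acts by $\pm 1$, which is what makes the induced basis invariant up to \emph{sign} rather than just up to scalar.
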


\begin{proof}
  Assume that $V$ is a direct sum of $G$-representations that are induced by one-dimensional representations. By Proposition~\ref{prop:sumtensor}, it suffices to show that an individual summand $\Ind_H^GL$ has an invariant basis up to sign. By definition,
  \begin{equation*}
    \Ind_H^GL=\bigoplus_{\sigma\in G/H}\sigma L,
  \end{equation*}
  a sum of one-dimensional subspaces. Thus, if $\xi$ is a nonzero element of the line $L$, then $\{g_\sigma\xi\}_{\sigma\in G/H}$ is a basis for $\Ind_H^GL$, where $g_\sigma\in G$ is a fixed representative of the coset $\sigma$. We claim that this basis is invariant up to sign. First, since $L$ is an $H$-invariant subspace, for $h\in H$, we have that $h\xi$ is in the line $L$, so $h\xi=\alpha\xi$ for some real number $\alpha$. Since $h^{\abs H}$ is the identity, $\alpha^{\abs H}=1$, so $\alpha=\pm1$. Next, for $g\in G$, we have $g=g_\sigma h$ for some $\sigma\in G/H$ and $h\in H$, so $g\xi=g_\sigma h\xi=g_\sigma(\pm\xi)=\pm g_\sigma\xi$. Finally, applying this result to $gg_\sigma\in G$, we see that $g(g_\sigma\xi)=(gg_\sigma)\xi=\pm g_{\sigma'}\xi$ for some $\sigma'\in G/H$, proving our claim.

  Conversely, assume that $V$ has a $G$-invariant basis $\{\xi_j\}_{j=1}^{\dim V}$ up to sign. Let $l_j$ be the line spanned by $\xi_j$. Since any $g\in G$ sends each $\xi_j$ to $\pm\xi_{j'}$ for some $j'$, we see that the group $G$ permutes the set of lines $\{l_j\}$. This action partitions the set of lines into orbits. For each orbit $i$, let $V_i$ denote the span of the lines in the orbit, and so we have $V=\bigoplus_iV_i$.

  Focusing on a single orbit, let $J_i$ denote set of all $j$ such that $l_j$ is in the orbit $i$. Let $L_i$ be an arbitrarily chosen line among the $l_j$ in this orbit. Let the subgroup $H_i$ be the stabilizer of $L_i$ with respect to the action of $G$. By the orbit-stabilizer theorem, for every $j\in J_i$ we have $l_j=\sigma L_i$ for a unique $\sigma\in G/H_i$. Therefore, $V_i=\bigoplus_{\sigma\in G/H_i}\sigma L_i$, so $V_i=\Ind_{H_i}^GL_i$ by definition.
\end{proof}

Together, Propositions~\ref{prop:inducesum}, \ref{prop:inducetrans} and \ref{prop:invariantinduced} show that induction preserves the property of having invariant bases.

\begin{proposition}\label{prop:induce}
  Let $K$ be a subgroup of a finite group $G$, and let $V=\Ind_K^GW$. If $W$ has a $K$-invariant basis up to sign, then $V$ has a $G$-invariant basis up to sign.
\end{proposition}

\begin{proof}
  \begin{equation*}
    V=\Ind_K^GW=\Ind_K^G\bigoplus_i\Ind_{H_i}^KL_i\cong\bigoplus_i\Ind_K^G\Ind_{H_i}^KL_i\cong\bigoplus_i\Ind_{H_i}^GL_i.\qedhere
  \end{equation*}
\end{proof}

\subsection{Examples of representations}\label{subsec:repexamples}

In this subsection, we will give examples of representations that will be relevant in this paper. We will often denote representations by a bold number representing its dimension, such as $\mb2$ for a two-dimensional representation. Of course, occasionally there can be non-isomorphic representations of the same dimension that are both relevant, in which case we will disambiguate.

\subsubsection{Representations of the symmetric group}
\begin{definition}
  We denote the permutation group by $S_{n+1}$ and view it as the group of permutations of the set $\{0,\dotsc,n\}$.
\end{definition}

\begin{definition}\label{def:1p}
  The symmetric group $S_{n+1}$ has a nontrivial one-dimensional irreducible representation, which we denote $\mb1'$, called the \emph{sign representation}. The action is $\pi v=v$ if $\pi\in S_{n+1}$ is an even permutation and $\pi v=-v$ if $\pi$ is an odd permutation.
\end{definition}

\begin{proposition}
  The sign representation $\mb1'$ has an $S_{n+1}$-invariant basis up to sign.
\end{proposition}

\begin{proof}
  Let $v$ be a nonzero element of $\mb1'$. Then $\{v\}$ is a basis and $\pi v = \pm v$ depending on whether $\pi$ is even or odd.
\end{proof}

\begin{example}\label{eg:tensor1p}
  Let $V$ be any $S_{n+1}$-representation, and let $W=V\otimes \mb1'$. Since $\mb1'$ is one-dimensional, we have that $V$ and $W$ are isomorphic as vector spaces via the map $v\mapsto v\otimes\xi$, where $\xi$ is a fixed nonzero vector in $\mb1'$. However, this map is not $S_{n+1}$-equivariant. Indeed, $\pi(v\otimes\xi)=(\pi v)\otimes\xi$ only if $\pi$ is an even permutation; if $\pi$ is odd then $\pi(v\otimes\xi)=-(\pi v)\otimes\xi$.

  In short, writing $W\cong V\otimes\mb1'$ is a formal way of capturing the idea that $W$ is the same vector space as $V$ with the same action of the permutation group, except that whenever we apply an odd permutation we multiply the resulting vector by $-1$. In general, $V$ and $W$ may or may not be isomorphic as $S_{n+1}$-representations.

  For example, consider $\R^3$ with coordinates $(x,y,z)$, where the group $S_3$ permutes the axes. Let $V$ be the set of constant one-forms, so it is spanned by $dx$, $dy$, and $dz$. Let $W$ be the set of constant $2$-forms, so it is spanned by $dy\wedge dz$, $dz\wedge dx$, and $dx\wedge dy$. The Hodge star yields an isomorphism between $V$ and $W$ as vector spaces, but it is not $S_3$-equivariant. Indeed, consider the permutation that fixes the $z$-axis and swaps the $x$-axis with the $y$-axis. This permutation sends $dz$ to $dz$ but it sends $dx\wedge dy$ to $dy\wedge dx=-dx\wedge dy$. To get an $S_3$-equivariant map, we must instead use the Hodge star to map $W$ to $V\otimes\mb 1'$, sending, for instance, $dx\wedge dy$ to $dz\otimes\xi$. Then, we see that the above permutation sends $dz\otimes\xi$ to $dz\otimes(-\xi)=-dz\otimes\xi$, as needed for equivariance. See also Remark~\ref{rem:cross}.
\end{example}

We remark here that $V$ has an invariant basis up to sign if and only if $W$ does.
\begin{proposition}\label{prop:tensorsign}
  Let $V$ be an $S_{n+1}$-representation. Then $V$ has an invariant basis up to sign if and only if $V\otimes\mb1'$ does.
\end{proposition}

\begin{proof}
  By Proposition~\ref{prop:sumtensor}, if $V$ has an invariant basis, then so does $V\otimes\mb1'$. The converse follows from the fact that
  \begin{equation*}
    (V\otimes\mb1')\otimes\mb1'\cong V\otimes(\mb1'\otimes\mb1')\cong V\otimes\mb1\cong V.\qedhere
  \end{equation*}
\end{proof}

\begin{definition}\label{def:permuterepresent}
  The symmetric group $S_{n+1}$ has an $(n+1)$-dimensional representation, which we denote $\mb{(n+1)}$, defined by permuting the basis vectors. That is, we let $e_0,\dotsc,e_n$ denote a basis of $\R^{n+1}$, and then we let $\pi e_i = e_{\pi(i)}$ and extend linearly.
\end{definition}

By construction, the basis $e_0,\dotsc,e_n$ is invariant.
\begin{proposition}\label{prop:n+1invariant}
  The representation $\mb{(n+1)}$ has an $S_{n+1}$-invariant basis.
\end{proposition}

For the representation $\mb{(n+1)}$, for any permutation $\pi\in S_{n+1}$, we have that $\pi(e_0+\dotsb+e_n)=e_0+\dotsb+e_n$. Hence, the line spanned by the vector $e_0+\dotsb+e_n$ is an invariant subspace of $\mb{(n+1)}$ and is isomorphic to $\mb1$. Since $S_{n+1}$ acts by rotations or reflections, the orthogonal complement of this line is also an invariant subspace.

\begin{definition}\label{def:standard}
  The \emph{standard representation} of the symmetric group $S_{n+1}$ is the $n$-dimensional representation that is the orthogonal complement of the line spanned by the vector $\langle1,1,\dotsc,1\rangle$ in $\mb{(n+1)}$. We will let $\mb n$ denote this representation for $n\ge2$.
\end{definition}

\begin{remark}
  We excluded the case $n=1$ from the above definition of the notation $\mb n$ to avoid a notational conflict, due to the fact that the standard representation of the group $S_2$ is the sign representation $\mb1'$, not the trivial representation $\mb1$.
\end{remark}

\begin{example}\label{eg:equilateral}
  One often introduces the symmetric group $S_3$ as the group of rotations and reflections of an equilateral triangle in the plane, or, equivalently, as the group of rotations and reflections of the plane that permute the vertices of the triangle. The plane with this action of $S_3$ is precisely the standard representation $\mb2$. We can see this more generally as follows.

  Consider the points $e_i-\frac1{n+1}(e_0+\dotsb+e_n)$ for $0\le i\le n$. Observe that they form the vertices of an equilateral simplex in $\mb n$, and observe that the action of $S_{n+1}$ on $\mb n$ permutes these vertices, sending $e_i-\frac1{n+1}(e_0+\dotsb+e_n)$ to $e_{\pi(i)}-\frac1{n+1}(e_0+\dotsb+e_n)$.
\end{example}

A well-known fact is that $\mb n$ is irreducible.

\begin{proposition}\label{prop:np1np1}
  The standard representation $\mb n$ of $S_{n+1}$ is irreducible. Consequently, the irreducible decomposition of $\mb{(n+1)}$ is
  \begin{equation*}
    \mb{(n+1)}\cong\mb n\oplus\mb1.
  \end{equation*}
\end{proposition}

On the surface, it appears that there is some ambiguity of notation. On the one hand, the symbol $\mb 3$ denotes the representation of $S_3$ that permutes the basis vectors of $\R^3$. On the other hand, $\mb3$ denotes the standard representation of $S_4$; concerningly, since $S_3$ is a subgroup of $S_4$, we can restrict the action of $S_4$ to $S_3$ to obtain a three-dimensional representation of $S_3$. Fortunately for our choice of notation, these two representations of $S_3$ are isomorphic.

\begin{proposition}
  Let $V$ be the standard representation of $S_{n+1}$. Then the subgroup $S_n$ of $S_{n+1}$ acts by permuting a basis of $V$.
\end{proposition}

\begin{proof}
  As in Definitions~\ref{def:permuterepresent} and \ref{def:standard}, we take $e_0,\dotsc,e_n$ to be the basis of $\R^{n+1}$ and view $V$ as the subspace that is orthogonal to $e_0+\dotsb+e_n$. We can check that $\{e_i-e_n\mid 0\le i\le n-1\}$ is a basis for $V$. The subgroup $S_n$ of $S_{n+1}$ consists of those permutations $\pi$ of $\{0,\dotsc,n\}$ such that $\pi(n)=n$. Thus, for $\pi\in S_n$, we have $\pi(e_i-e_n)=e_{\pi(i)}-e_n$, as desired.
\end{proof}
  
\subsubsection{Representations of $\Z/3$}\label{subsubsec:z3}
We will make use of the following three representations of $\Z/3$.
\begin{itemize}
\item Per Definition~\ref{def:trivial}, $\Z/3$ has the trivial representation $\mb 1$.
\item Per Definition~\ref{def:permuterepresent}, the group $S_3$ has a representation $\mb 3$. Viewing the group $\Z/3$ as the subgroup of $S_3$ consisting of cyclic permutations of $\{0,1,2\}$, we can restrict this $S_3$-representation to a $\Z/3$-representation. Abusing notation slightly, we will also refer to this $\Z/3$-representation as $\mb3$. Explicitly, letting $g$ be the generator of $\Z/3$ that sends $0$ to $1$, $1$ to $2$, and $2$ to $0$, we have that $g$ acts on $\mb3$ via $ge_0=e_1$, $ge_1=e_2$, and $ge_2=e_0$.
\item Likewise, per Definition~\ref{def:standard}, the group $S_3$ has the standard representation $\mb 2$, which we can restrict to a $\Z/3$-representation that we also denote by $\mb2$. Explicitly, the group $\Z/3$ acts on $\mb2$ by $120^\circ$ rotations. Indeed, $v=e_0-e_1\in\mb2$ per Definition~\ref{def:standard}, and the generator $g$ of $\Z/3$ sends $v$ to $w=e_1-e_2$. One can check that the angle between $v$ and $w$ is $120^\circ$. As a consequence, we have that $\mb2$ is irreducible as a $\Z/3$-representation.
\end{itemize}

Per the above discussion, Proposition~\ref{prop:np1np1} still holds in the context of $\Z/3$-representations.
\begin{proposition}
  As $\Z/3$-representations, the irreducible decomposition of $\mb 3$ is $\mb3\cong\mb 2\oplus\mb1$.  
\end{proposition}

We also have the following easy observations.
\begin{proposition}\label{prop:13invariant}
  $\mb 1$ and $\mb 3$ have $\Z/3$-invariant bases.
\end{proposition}

\begin{proof}
  If a basis is $S_3$-invariant, then it is also $\Z/3$-invariant, so $\mb1$ and $\mb3$ have $\Z/3$-invariant bases by Propositions~\ref{prop:1invariant} and \ref{prop:n+1invariant} (or by observation).  
\end{proof}

\begin{proposition}
  The representation $\mb 3$ is isomorphic to the regular representation of $\Z/3$.
\end{proposition}

\begin{proof}
  As discussed, $\mb 3$ has basis $\{e_0,e_1,e_2\}$, and the generator $g$ of $\mb 3$ sends $e_0$ to $e_1$ to $e_2$ back to $e_0$. Meanwhile, by Definition~\ref{def:regular}, the regular representation $\R[\Z/3]$ has basis $\Z/3=\{e,g,g^2\}$, and $g$ sends $e$ to $g$ to $g^2$ back to $e$. Therefore, the linear map sending $e_0,e_1,e_2$ to $e,g,g^2$, respectively, is a $\Z/3$-equivariant isomorphism between $\mb 3$ and $\R[\Z/3]$.
\end{proof}

\begin{corollary}\label{cor:z3irreducible}
  The only irreducible representations of $\Z/3$ are $\mb 1$ and $\mb 2$.
\end{corollary}

\begin{proof}
  By Proposition~\ref{prop:irreducibleregular}, every irreducible representation of $\Z/3$ is a subrepresentation of $\R[\Z/3]\cong\mb 3\cong\mb 1\oplus\mb 2$.
\end{proof}

Consequently, by Maschke's theorem, we have the following corollary.

\begin{corollary}\label{cor:z3decomposition}
  Any finite-dimensional representation of $\Z/3$ is isomorphic to $m\mb1\oplus n\mb 2$ for some nonnegative integers $m$ and $n$.
\end{corollary}

Applying Corollaries~\ref{cor:inducetrivial} and \ref{cor:tensorregular} to the regular representation $\mb 3$ of $\Z/3$, we obtain the following results.

\begin{corollary}\label{cor:ind13}
  Let $e$ be the trivial subgroup of $\Z/3$ and $\mb 1$ the trivial $e$-representation. Then
  \begin{equation*}
    \Ind_e^{\Z/3}\mb1\cong\mb3.
  \end{equation*}
\end{corollary}

\begin{corollary}\label{cor:tensor3}
  If $V$ is a $\Z/3$-representation, then $V\otimes\mb3\cong(\dim V)\mb 3$.
\end{corollary}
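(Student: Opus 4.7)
The plan is to reduce the claim to the two irreducible cases by decomposing $V$ into a direct sum of copies of $\mb1$ and $\mb2$ and using bilinearity of the tensor product. Since every $\Z/3$-representation is isomorphic to $m\mb1\oplus n\mb2$ for some $m,n\geq 0$, and since tensoring with $\mb3$ distributes over direct sums, it will suffice to show that $\mb1\otimes\mb3\cong\mb3$ and $\mb2\otimes\mb3\cong 2\mb3$, because $\dim(m\mb1\oplus n\mb2)=m+2n$ and combining these cases gives $(m\mb1\oplus n\mb2)\otimes\mb3\cong m\mb3\oplus 2n\mb3=(m+2n)\mb3$.

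The first case is immediate: tensoring with the trivial representation does nothing, so $\mb1\otimes\mb3\cong\mb3=1\cdot\mb3=(\dim\mb1)\mb3$. For the second case, I would invoke the previous proposition to write $\mb3\cong\mb1\oplus\mb2$ and compute
\begin{equation*}
  \mb2\otimes\mb3\cong\mb2\otimes(\mb1\oplus\mb2)\cong\mb2\oplus(\mb2\otimes\mb2)\cong\mb2\oplus(2\mb1\oplus\mb2)\cong 2\mb1\oplus 2\mb2\cong 2\mb3,
\end{equation*}
again using the preceding proposition for the decompositions $\mb2\otimes\mb2\cong 2\mb1\oplus\mb2$ and $\mb1\oplus\mb2\cong\mb3$. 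This matches $(\dim\mb2)\mb3=2\mb3$.

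There is no real obstacle here; the only thing to verify carefully is that the preceding proposition provides exactly the two ingredients needed, namely the irreducible decomposition of $\mb3$ and the Clebsch--Gordan--type decomposition of $\mb2\otimes\mb2$. Since we have already classified the irreducibles of $\Z/3$ as $\mb1$ and $\mb2$, any representation $V$ is uniquely of the form $m\mb1\oplus n\mb2$, which justifies reducing to these two cases.
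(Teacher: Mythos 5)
Your proof is correct and takes essentially the same approach as the paper: reduce to $V\cong m\mb1\oplus n\mb2$, verify $\mb1\otimes\mb3\cong\mb3$ and $\mb2\otimes\mb3\cong2\mb3$ via the preceding proposition, and combine by distributivity.
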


\subsection{Finite element exterior calculus and its symmetries}\label{subsec:feec}

Arnold, Falk, and Winther define the $\cP_r\Lambda^k(\mathcal T)$ and $\cP_r^-\Lambda^k(\mathcal T)$ of $k$-forms with piecewise polynomial coefficients of degree at most $r$ on a simplicial triangulation $\mathcal T$. For a more detailed introduction to these spaces and for the correspondences between these spaces and standard finite element spaces of scalar fields and vector fields, see \cite{afw06}, as well as \cite{a13, afw09, afw10}. In this paper, we focus on a single simplex $T$, which we may map via barycentric coordinates to a standard reference simplex $T^n\subset\R^{n+1}$ consisting of nonnegative tuples $(\lambda_0,\dotsc,\lambda_n)$ with $\lambda_0+\dotsb+\lambda_n=1$. In this context, $\cP_r\Lambda^k(T^n)$ is the space of all $k$-forms on $T^n$ with polynomial coefficients of degree at most $r$, and $\cP_r^-\Lambda^k(T^n)$ is a slightly smaller space, between $\cP_{r-1}\Lambda^k(T^n)$ and $\cP_r\Lambda^k(T^n)$. For the original definition, see \cite{afw06}; however, in this context we give a natural definition from \cite{bk21feec}.

\begin{definition}
  Let $\cP_r\Lambda^k(\R^{n+1})$ denote $k$-forms on $\R^{n+1}$ with polynomial coefficients of degree at most $r$. If $r<0$, we let $\cP_r\Lambda^k(\R^{n+1})=0$. Let $X$ denote the position vector field on $\R^{n+1}$; that is, the value of $X$ at any point $(\lambda_0,\dotsc,\lambda_n)\in\R^{n+1}$ is just the vector with components $\langle\lambda_0,\dotsc,\lambda_n\rangle$. For a $(k+1)$-form $\alpha$, let $i_X\alpha$ denote the contraction with $X$, that is, $i_X\alpha$ is the $k$-form defined by $i_X\alpha(Y_1,\dotsc,Y_k)=\alpha(X, Y_1, \dotsc, Y_k)$. Let $\cP_r^-\Lambda^k(\R^{n+1})$ be the image of $\cP_{r-1}\Lambda^{k+1}(\R^{n+1})$ under $i_X$.

  As before, let $T^n$ be the subset of $\R^{n+1}$ consisting of those nonnegative tuples $(\lambda_0,\dotsc,\lambda_n)$ such that $\lambda_0+\dotsb+\lambda_n=1$. Given a $k$-form on $\R^{n+1}$, we can restrict it to a $k$-form on $T^n$. Let $\cP_r\Lambda^k(T^n)$ and $\cP_r^-\Lambda^k(T^n)$ denote the images of $\cP_r\Lambda^k(\R^{n+1})$ and $\cP_r^-\Lambda^k(\R^{n+1})$ under this restriction map, respectively.
\end{definition}

The symmetric group $S_{n+1}$ acts naturally on $\R^{n+1}$ by permuting the coordinates. This action induces an isometry of $T^n$ that permutes the vertices. A map $T^n\to T^n$ induces a pullback map $\Lambda^k(T^n)\to\Lambda^k(T^n)$. One can check using the invariance of the vector field $X$ that both the $\cP_r\Lambda^k(T^n)$ and $\cP_r^-\Lambda^k(T^n)$ spaces are preserved under this pullback map. However, because pullback is contravariant, we must be somewhat careful in how we define the action of $S_{n+1}$ on $\Lambda^k(T^n)$ so that it has the structure of an $S_{n+1}$-representation.

\begin{definition}
  Let $\pi$ be a permutation in $S_{n+1}$, so, with notation as before, $\pi$ is a bijection from the set $\{0,\dotsc,n\}$ to itself. We define a corresponding map $T^n\to T^n$ via
  \begin{equation*}
    S_\pi(\lambda_0,\dotsc,\lambda_n)=(\lambda_{\pi(0)},\dotsc,\lambda_{\pi(n)}).
  \end{equation*}
\end{definition}

\begin{remark}
  Let $v_i$ be the $i$th vertex of $T^n$, that is, $v_i=(0,\dotsc,0,1,0,\dotsc,0)$, where the $1$ is in the $i$th position. Observe that the image of $v_i$ under $S_\pi$ depends on the \emph{inverse} of $\pi$ at $i$, that is, $S_\pi(v_i)=v_{\pi^{-1}(i)}$.
\end{remark}

\begin{proposition}[\cite{li19}]
  If $\pi$ and $\sigma$ are two permutations in $S_{n+1}$, then we have the contravariant formula $S_{\pi\circ\sigma}=S_\sigma\circ S_\pi$. Consequently, the pullback maps $\Lambda^k(T^n)\to\Lambda^k(T^n)$ satisfy the covariant formula $S_{\pi\circ\sigma}^*=S_\pi^*\circ S_\sigma^*$. Thus, $\Lambda^k(T^n)$ is an $S_{n+1}$-representation with respect to this action, and hence so are the spaces $\cP_r\Lambda^k(T^n)$ and $\cP_r^-\Lambda^k(T^n)$.
\end{proposition}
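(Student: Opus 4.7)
The plan is three steps: verify the contravariant composition law $S_{\pi\circ\sigma}=S_\sigma\circ S_\pi$ by a direct coordinate computation, deduce the covariant pullback identity from the contravariant functoriality of pullback on differential forms, and finally check that each of the subspaces $\cP_r\Lambda^k(T^n)$ and $\cP_r^-\Lambda^k(T^n)$ is preserved by every $S_\pi^*$.

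For the composition law, I would just unpack definitions at a point $\lambda$: the $i$th coordinate of $S_\sigma(S_\pi(\lambda))$ is $S_\pi(\lambda)_{\sigma(i)} = \lambda_{\pi(\sigma(i))} = \lambda_{(\pi\circ\sigma)(i)}$, which is also the $i$th coordinate of $S_{\pi\circ\sigma}(\lambda)$. The general identity $(S_\sigma\circ S_\pi)^* = S_\pi^*\circ S_\sigma^*$ for pullbacks, together with $S_{\mathrm{id}}^*=\mathrm{id}$, then gives the covariant formula and hence an $S_{n+1}$-representation structure on $\Lambda^k(T^n)$.

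For stability, the case of $\cP_r\Lambda^k(T^n)$ is immediate: since $S_\pi$ is a linear map permuting the coordinates, pullback sends $\lambda_j$ to $\lambda_{\pi(j)}$ and $d\lambda_i$ to $d\lambda_{\pi(i)}$, so both the wedge degree and the polynomial degree in the $\lambda_j$'s are preserved. The case of $\cP_r^-\Lambda^k(T^n)$ reduces to showing that $S_\pi^*$ commutes with $i_X$. For this I would verify that the position field $X$ is $S_\pi$-invariant: since $S_\pi$ is linear, its differential at any point is $S_\pi$ itself, so $(S_\pi)_*X_\lambda = S_\pi(\lambda) = X_{S_\pi(\lambda)}$. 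Applying the standard naturality identity for interior multiplication by an invariant vector field then yields $S_\pi^*(i_X\alpha) = i_X(S_\pi^*\alpha)$, so $i_X\bigl(\cP_{r-1}\Lambda^{k+1}(\R^{n+1})\bigr)$ is preserved; restricting to $T^n$ completes the argument.

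The only part requiring real care, and the one I would flag as the main obstacle if anything, is the bookkeeping in the composition law — in particular, noticing that it comes out contravariantly on $T^n$, which after pullback becomes covariant on forms — together with correctly invoking the $i_X$ naturality identity, which depends on $X$ being genuinely $S_\pi$-invariant rather than merely tangent to the simplex. Neither point is deep, but both are easy to confuse in an index-heavy calculation.
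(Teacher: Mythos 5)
Your proof is correct and matches the approach the paper sketches (the paper simply cites Licht for this proposition and notes that preservation of $\cP_r^-\Lambda^k$ ``can be checked using the invariance of the vector field $X$''). The coordinate computation of the contravariant composition law, the passage to the covariant formula via contravariance of pullback, and the use of $(S_\pi)_*X=X$ together with naturality of $i_X$ are exactly the intended steps.
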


\begin{remark}
  Concretely, one can check that $S_\pi^*(\lambda_i)=\lambda_{\pi(i)}$. Since pullbacks commute with the exterior derivative, we have $S_\pi^*(d\lambda_i)=d\lambda_{\pi(i)}$. Using the fact that pullbacks commute with wedge products, we can see that, more generally, $S_\pi^*$ acts on $\cP_r\Lambda^k(T^n)$ by replacing all instances of $\lambda_i$ (including in $d\lambda_i$) with $\lambda_{\pi(i)}$.
\end{remark}

We give some relevant examples. We begin with constant one-forms.

\begin{proposition}\label{prop:p0l1}
  The representation $\cP_0\Lambda^1(T^n)$ is isomorphic to the standard representation $\mb n$ of $S_{n+1}$.
\end{proposition}

\begin{proof}
  Using the notation in Definition~\ref{def:permuterepresent}, consider the map $\mb{(n+1)}\to\cP_0\Lambda^1(T^n)$ defined by $e_i\mapsto d\lambda_i$. This map is equivariant, as $\pi e_i=e_{\pi(i)}$ and $S_\pi^*d\lambda_i=d\lambda_{\pi(i)}$. This map is surjective, and its kernel is the space spanned by $e_0+\dotsb+e_n$, which maps to $d\lambda_0+\dotsb+d\lambda_n=d(\lambda_0+\dotsb+\lambda_n)=d(1)=0$. Therefore, this map is an isomorphism when restricted to the orthogonal complement of the kernel, namely $\mb n\to\cP_0\Lambda^1(T^n)$.
\end{proof}

Recalling Definition~\ref{def:1p} and Example~\ref{eg:tensor1p}, our next example is constant $(n-1)$-forms.

\begin{proposition}\label{prop:p0ln-1}
  The representation $\cP_0\Lambda^{n-1}(T^n)$ is isomorphic to $\mb n\otimes\mb1'$.
\end{proposition}

\begin{proof}
  As vector spaces, the Hodge star on $T^n$ is an isomorphism between $\cP_0\Lambda^{n-1}(T^n)$ and $\cP_0\Lambda^1(T^n)$. If $\pi$ is an even permutation, then $S_\pi$ is a rotation, and if $\pi$ is an odd permutation, then $S_\pi$ is a reflection. The Hodge star commutes with rotations and anti-commutes with reflections. Thus, $S_\pi^*(*\alpha)=*(S_\pi^*\alpha)$ if $\pi$ is even and $S_\pi^*(*\alpha)=-{*(S_\pi^*\alpha)}$ if $\pi$ is odd. Thus, as $S_{n+1}$-representations, the Hodge star gives an isomorphism $\cP_0\Lambda^{n-1}(T^n)\cong\cP_0\Lambda^1(T^n)\otimes\mb 1'\cong\mb n\otimes\mb1'$.
\end{proof}

\begin{remark}\label{rem:cross}
  We can, more generally, view $\mb n$ as the standard representation of the orthogonal group $O(n)$, of which $S_{n+1}$ is a subgroup. In this context, vectors in $\mb n\otimes\mb 1'$ are called \emph{pseudovectors}: they rotate like normal vectors, but upon reflection they acquire an additional minus sign. A standard example is cross-products. If we rotate $v$ and $w$ in $\R^3$ to $v'$ and $w'$, then if we apply that rotation to $v\times w$ we arrive at $v'\times w'$. In contrast, if we reflect $v$ and $w$ to $v'$ and $w'$, then if we apply the same reflection to $v\times w$ we arrive at $-v'\times w'$.
\end{remark}

Next, we discuss the concept of trace in this context.

\begin{definition}
  Let $F$ be a face of a simplex $T$. Then the inclusion map $F\hookrightarrow T$ induces a restriction or \emph{trace} map $\Lambda^k(T)\to\Lambda^k(F)$, which we denote $\tr_{T, F}$.
\end{definition}

\begin{definition}
  Let $\oP_r\Lambda^k(T^n)$ and $\oP_r^-\Lambda^k(T^n)$ denote those forms in $\cP_r\Lambda^k(T^n)$ and $\cP_r^-\Lambda^k(T^n)$, respectively, that have vanishing trace on $\partial T^n$, that is, vanishing trace on the $(n-1)$-dimensional faces of $T^n$.
\end{definition}

One can check that because permutations in $S_{n+1}$ permute the faces of $T^n$, the spaces $\oP_r\Lambda^k(T^n)$ and $\oP_r^-\Lambda^k(T^n)$ are preserved under the action of $S_{n+1}$ and are thus $S_{n+1}$-representations in their own right.

\subsubsection*{The duality isomorphism} We are now ready to discuss the duality isomorphism that is one of the key ingredients in finite element exterior calculus. Without considering the symmetric group, this isomorphism was first discussed in \cite{afw06} and developed in greater depth in \cite{bk21feec, li18}. Viewing these spaces as $S_{n+1}$-representations, this isomorphism was first proved by Licht \cite{li19}, and we provide an alternate proof here. It may be helpful to recall Definition~\ref{def:1p} and Example~\ref{eg:tensor1p}.

\begin{proposition}[{reformulation of \cite[Theorem 2]{li19}}]\label{prop:popiso}
  We have the isomorphisms of $S_{n+1}$-representations
  \begin{align*}
    \cP_r\Lambda^k(T^n)&\cong\oP_{r+k+1}^-\Lambda^{n-k}(T^n)\otimes\mb1',&\cP_r^-\Lambda^k(T^n)&\cong\oP_{r+k}\Lambda^{n-k}(T^n)\otimes\mb1',
  \end{align*}
  except for the case $\cP_0^-\Lambda^0(T^n)\cong0\not\cong\oP_0\Lambda^n(T^n)$.
\end{proposition}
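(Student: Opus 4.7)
The plan is to exhibit both isomorphisms as adjoints of the natural wedge-product integration pairing, which will make the $S_{n+1}$-equivariance transparent. Define
\begin{equation*}
B_1(\omega, \eta) = \int_{T^n} \omega \wedge \eta, \qquad B_2(\omega, \eta) = \int_{T^n} \omega \wedge \eta,
\end{equation*}
where $B_1$ pairs $\cP_r\Lambda^k(T^n)$ against $\oP_{r+k+1}^-\Lambda^{n-k}(T^n)$ and $B_2$ pairs $\cP_r^-\Lambda^k(T^n)$ against $\oP_{r+k}\Lambda^{n-k}(T^n)$. The argument has two parts: show that each $B_i$ is non-degenerate, and show that each $B_i$ is $S_{n+1}$-equivariant up to the sign character.

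Equivariance is the easy step. For any $\pi \in S_{n+1}$, the isometry $S_\pi : T^n \to T^n$ is orientation-preserving or orientation-reversing according as $\pi$ is even or odd, so the change-of-variables formula gives $\int_{T^n} S_\pi^* \alpha = \mathrm{sgn}(\pi) \int_{T^n} \alpha$ for any integrable $n$-form $\alpha$. Applied to $\alpha = \omega \wedge \eta$, and using that pullback distributes over the wedge product, this yields $B_i(S_\pi^* \omega, S_\pi^* \eta) = \mathrm{sgn}(\pi) B_i(\omega, \eta)$, which is precisely the statement that $B_i$ factors through an $S_{n+1}$-equivariant map from the tensor product of the two spaces to $\mb1'$.

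Non-degeneracy is the substantive step; it is the classical FEEC duality between these families of spaces, developed in \cite{afw06}, \cite{li18}, and in my earlier paper \cite{bk21feec}, and can either be cited or reproved by explicit construction of dual bases (the interior-degree-of-freedom bases). Given non-degeneracy, $B_1$ induces an $S_{n+1}$-equivariant isomorphism $\cP_r\Lambda^k(T^n) \to (\oP_{r+k+1}^-\Lambda^{n-k}(T^n))^* \otimes \mb1'$, and since every finite-dimensional real representation of a finite group is self-dual via an invariant inner product, this gives the required isomorphism $\cP_r\Lambda^k(T^n) \cong \oP_{r+k+1}^-\Lambda^{n-k}(T^n) \otimes \mb1'$. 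The argument for $B_2$ is parallel.

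The main obstacle is non-degeneracy of the pairings, which really is the heart of the classical FEEC duality; the equivariance contribution of this proof is clean once that is in hand. The exceptional case $\cP_0^-\Lambda^0(T^n)$ versus $\oP_0\Lambda^n(T^n)$ is handled directly: the former vanishes by the convention $\cP_{-1}\Lambda^{k+1}(\R^{n+1}) = 0$ built into the definition of $\cP_r^-\Lambda^k$, while the latter is the one-dimensional span of the volume form, so no isomorphism can exist.
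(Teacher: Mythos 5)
Your proof is correct, but takes a genuinely different route from the one in the paper. The paper proves equivariance by analyzing a specific explicit isomorphism from \cite{bk21feec}, namely $(\Phi^*)^{-1}\circ(u_0\dotsm u_n)\,{*_{S^n}}\circ\Phi^*$, where $\Phi\colon S^n\to T^n$ is the coordinate-squaring map: it observes that $\Phi$ and the bubble $u_0\dotsm u_n$ commute with permutations while the Hodge star $*_{S^n}$ commutes or anticommutes with an isometry of $S^n$ according to its orientation sign. You instead work with the wedge-integration pairing $\int_{T^n}\omega\wedge\eta$: the sign character falls out of the change-of-variables formula, non-degeneracy is the classical FEEC duality (which, like the paper's vector-space input, must be imported from \cite{afw06,li18,bk21feec}), and self-duality of real representations of finite groups converts the resulting dual-space isomorphism into the stated one. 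Both routes outsource the substantive linear-algebraic content to prior work and contribute only the equivariance analysis; the trade-off is that the paper's isomorphism is a single canonical map with no auxiliary choices, which is useful if one wants to push explicit bases through the isomorphism, whereas your argument is shorter and more conceptual but the final isomorphism depends on a choice of invariant inner product on the target. It is also worth noting that your strategy is likely closer in spirit to Licht's original proof in \cite{li19}, to which the paper's argument is offered as an alternative.

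Two small points to keep precise if you write this up. First, when you say $B_1$ induces an equivariant isomorphism to $(\oP_{r+k+1}^-\Lambda^{n-k}(T^n))^*\otimes\mb1'$, the verification is that the map $v\mapsto B_1(v,\cdot)$ satisfies $\phi(S_\pi^*v)=\operatorname{sgn}(\pi)\,S_\pi^*\phi(v)$, which is exactly equivariance into the sign-twisted dual; it is worth stating this rather than leaving it implicit. Second, the self-duality step needs the inner product on $W=\oP_{r+k+1}^-\Lambda^{n-k}(T^n)$ to be $S_{n+1}$-invariant (e.g.\ the $L^2$ inner product, or an averaged one); only then does $w\mapsto\langle w,\cdot\rangle$ give a $G$-isomorphism $W\cong W^*$.
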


\begin{proof}[Alternate proof]
  In \cite[Corollary 3.3]{bk21feec}, we construct vector space isomorphisms $\cP_r\Lambda^k(T^n)\to\oP_{r+k+1}^-\Lambda^{n-k}(T^n)$ and $\cP_r^-\Lambda^k(T^n)\to\oP_{r+k}\Lambda^{n-k}(T^n)$. Specifically both of these isomorphisms are the function $(\Phi^*)^{-1}\circ(u_0\dotsm u_n){*_{S^n}}\circ\Phi^*$, where the notation is as follows. The map $\Phi$ maps the unit sphere $S^n\subset\R^{n+1}$ to the standard simplex $T^n\subset\R^{n+1}$ by sending $(u_0,\dotsc,u_n)\mapsto(u_0^2,\dotsc,u_n^2)$. We can thus think of the pullback map $\Phi^*\colon\Lambda^k(T^n)\to\Lambda^k(S^n)$ as the change of coordinates defined by $\lambda_i=u_i^2$, $d\lambda_i=2u_i\,du_i$. The map $*_{S^n}\colon\Lambda^k(S^n)\to\Lambda^{n-k}(S^n)$ is the Hodge star on the unit sphere, and $u_0\dotsm u_n$ denotes multiplication by the product of all the coordinate functions.

  It remains then to see what happens to the map $(\Phi^*)^{-1}\circ(u_0\dotsm u_n){*_{S^n}}\circ\Phi^*$ when we permute the coordinate directions. The map $\Phi$ commutes with permuting the coordinate directions. Likewise, the bubble function $u_0\dotsm u_n$ is invariant under permuting the coordinate directions. Finally, the Hodge star $*_{S^n}$ commutes with \emph{orientation-preserving} isometries of $S^n$, but \emph{anti-commutes} with orientation-reversing isometries. Whether or not a permutation of the coordinates preserves or reverses orientation is precisely the sign of that permutation. We conclude that the maps $\cP_r\Lambda^k(T^n)\to\oP_{r+k+1}^-\Lambda^{n-k}(T^n)$ and $\cP_r^-\Lambda^k(T^n)\to\oP_{r+k}^-\Lambda^{n-k}(T^n)$ commute with even permutations of the coordinates and anti-commute with odd permutations of the coordinates, as desired.
\end{proof}

\section{Geometric Decompositions}\label{sec:geodecomp}
The geometric decomposition of Arnold, Falk, and Winther \cite{afw06, afw09} decomposes the $\cP_r\Lambda^k(T^n)$ and $\cP_r^-\Lambda^k(T^n)$ spaces as a direct sum of spaces of forms on the faces $F$ of the simplex $T^n$. This result is key to defining these finite element spaces on a triangulation, and also key to Licht's recursive basis construction \cite{li19}. In this section, we distill the essential features of the geometric decomposition, giving a general theory that is well-suited to understanding the representation theory of these spaces, thereby helping us not only to construct invariant bases but also to prove when such bases do not exist.

We discuss the geometric decomposition in two subsections: first we discuss the vector space decomposition, and then we show that it is in fact a decomposition of $S_{n+1}$-representations. Subsection~\ref{subsec:linearalg} is essentially a generalization of \cite[Section 4]{afw09}, and Subsection~\ref{subsec:representations} is essentially a generalization of \cite[Section 7]{li19}. We establish our theory for a general finite-dimensional subspace $V\subset\Lambda^k(T^n)$, but, in practice, $V$ will be either $\cP_r\Lambda^k(T^n)$ or $\cP_r^-\Lambda^k(T^n)$.

\subsection{Vector space decomposition}\label{subsec:linearalg}
\begin{definition}
  Given a finite-dimensional subspace $V\subset\Lambda^k(T^n)$, for a face $F\subseteq T^n$, let $V(F)=\tr_{T^n, F}(V)$, and let $\oV(F)$ be those forms in $V(F)$ whose traces vanish on $\partial F$.
\end{definition}

\begin{example}
  If $V=\cP_r\Lambda^k(T^n)$, then $V(F)=\cP_r\Lambda^k(F)$, and $\oV(F)=\oP_r\Lambda^k(F)$, and likewise for the $\cP_r^-\Lambda^k(T^n)$ spaces. (For the compatibility of $\cP_r^-\Lambda^k$ with traces, see \cite[Subsection 3.2]{afw09}.)
\end{example}

\begin{definition}\label{def:Vd}
  We define a filtration
  \begin{equation*}
    V = V_0 \supseteq V_1\supseteq\dotsb\supseteq V_n=\oV(T^n)\supseteq V_{n+1}=0
  \end{equation*}
  by letting $V_d$ denote those forms in $V$ whose traces vanish on all of the $(d-1)$-dimensional faces of $T^n$.
\end{definition}

\begin{example}\label{eg:P2}
  If $V=\cP_2\Lambda^0(T^2)$, then
  \begin{equation*}
    V=V_0\supseteq V_1=\Span\{\lambda_1\lambda_2,\lambda_2\lambda_0,\lambda_0\lambda_1\}\supseteq V_2=0\supseteq V_3=0.
  \end{equation*}
\end{example}

\begin{definition}\label{def:W}
  We let $W_d$ be a complement to $V_{d+1}$ in $V_d$, so $V_d=W_d\oplus V_{d+1}$. For now, the particular choice of $W_d$ does not matter, but we note that there are some natural options.
  \begin{itemize}
  \item One natural option is to use orthogonal complements with respect to the the inner product $(\alpha,\beta)\mapsto\int_{T^n}\langle\alpha,\beta\rangle$, that is, we define $W_d$ to contain those forms in $V_d$ that are orthogonal to $V_{d+1}$ with respect to this inner product.
  \item Another natural option, if available, is to use consistent extension operators as defined in \cite[Section 4]{afw09}. We discuss this option in more detail in Appendix~\ref{sec:extension}.
  \end{itemize}
\end{definition}

\begin{proposition}\label{prop:Wdecomp}
\begin{equation*}
  V = W_0\oplus\dotsb\oplus W_n.
\end{equation*}
\end{proposition}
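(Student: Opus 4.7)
The plan is to prove the decomposition by a straightforward descending induction on $d$, establishing the stronger statement that $V_d = W_d \oplus W_{d+1} \oplus \dotsb \oplus W_n$ for each $d$ from $n+1$ down to $0$. Specializing to $d = 0$ then yields the desired conclusion, since $V_0 = V$ by Definition~\ref{def:Vd}.

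For the base case, when $d = n+1$, the statement reads $V_{n+1} = 0$, interpreting the empty direct sum as zero, which holds by definition. For the inductive step, suppose that $V_{d+1} = W_{d+1} \oplus \dotsb \oplus W_n$. Then by the defining property of $W_d$ in Definition~\ref{def:W}, namely $V_d = W_d \oplus V_{d+1}$, we substitute to obtain
\begin{equation*}
  V_d = W_d \oplus V_{d+1} = W_d \oplus W_{d+1} \oplus \dotsb \oplus W_n,
\end{equation*}
completing the induction.

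There is essentially no obstacle here: the claim is a purely formal telescoping of direct-sum complements along a finite filtration, and it uses none of the geometric content of the spaces $\cP_r\Lambda^k(T^n)$ or $\cP_r^-\Lambda^k(T^n)$. The only thing one needs is that each $W_d$ is chosen so that $V_d = W_d \oplus V_{d+1}$ as an \emph{internal} direct sum, which is built into Definition~\ref{def:W}. I expect the author's proof will be a one- or two-line remark to this effect, perhaps phrased as an iterated substitution rather than a formal induction.
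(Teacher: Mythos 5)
Your proof is correct and matches the paper's approach: the paper likewise proves by induction that $V_{n-d} = W_{n-d} \oplus \dotsb \oplus W_n$, which is your descending induction with a shifted index. Your prediction that the author's proof would be a brief remark was also accurate.
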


\begin{proof}
  The proposition follows from induction on the claim that $V_{n-d}=W_{n-d}\oplus\dotsb\oplus W_n$.
\end{proof}

To further decompose the $W_d$, we need the following lemma.

\begin{lemma}\label{lemma:traceW}
  If $\alpha\in W_d$ and $F$ is a $d$-dimensional face of $T^n$, then $\tr_{T^n, F}\alpha$ has vanishing trace on $\partial F$.
\end{lemma}
\begin{proof}
  The boundary of $F$ is composed of $(d-1)$-dimensional faces of $T^n$. Since $W_d\subseteq V_d$, forms in $W_d$ have vanishing trace on $(d-1)$-dimensional faces of $T^n$ by definition.
\end{proof}

Lemma~\ref{lemma:traceW} yields a map $W_d\to\oV(F)$. Taking the direct sum of these maps for all $F$, we obtain our geometric decomposition.

\begin{definition}\label{def:geodecomp}
  Given complements $W_d$ as in Definition~\ref{def:W}, the \emph{geometric decomposition map}
  \begin{equation*}
    \mathcal D\colon V\to\bigoplus_{d=0}^n\bigoplus_{\substack{F\subseteq T^n\\\dim F = d}}\oV(F)
  \end{equation*}
  is defined as follows.

  For $\alpha\in W_d$, let
  \begin{equation}\label{eq:geodecomp}
    \mathcal D(\alpha)=\bigoplus_{\substack{F\subseteq T^n\\\dim F = d}}\tr_{T^n, F}\alpha\in\bigoplus_{\substack{F\subseteq T^n\\\dim F = d}}\oV(F),
  \end{equation}
  and extend by linearity to $V$.
\end{definition}

\begin{remark}
  Note that our definition of the geometric decomposition map depends not only on $V$ but also on the choice of complements $W_d$. This choice is not unique, and different choices will yield different decomposition maps.
\end{remark}

Nonetheless, the useful properties of the map $\mathcal D$ do not depend on these choices.

\begin{proposition}\label{prop:injective}
  The geometric decomposition map $\mathcal D$ is injective.
\end{proposition}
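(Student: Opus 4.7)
The plan is to reduce the injectivity of $\mathcal{D}$ to injectivity on each piece $W_d$ by exploiting the fact that the target of $\mathcal{D}$ already splits according to face dimension, and then to use the defining property of the complements $W_d$.

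First I would write a general element of $V$ as $\alpha = \alpha_0 + \cdots + \alpha_n$ with $\alpha_d \in W_d$, which is possible and unique by Proposition~\ref{prop:Wdecomp}. Observe that by construction $\mathcal{D}(\alpha_d)$ lies in the single summand $\bigoplus_{\dim F = d} \oV(F)$ of the target. Since these summands (indexed by $d$) are internally direct, the equation $\mathcal{D}(\alpha) = 0$ forces $\mathcal{D}(\alpha_d) = 0$ for every $d$ separately. So it suffices to show that the restriction $\mathcal{D}|_{W_d} \colon W_d \to \bigoplus_{\dim F = d}\oV(F)$ is injective for each $d$.

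Next I would fix $d$ and suppose $\alpha \in W_d$ with $\tr_{T^n, F}\alpha = 0$ for every $d$-dimensional face $F$. By Definition~\ref{def:Vd}, vanishing of the trace of $\alpha$ on every $d$-dimensional face puts $\alpha$ into $V_{d+1}$. But $W_d$ was chosen (in Definition~\ref{def:W}) to be a complement to $V_{d+1}$ inside $V_d$, so $W_d \cap V_{d+1} = 0$, hence $\alpha = 0$. Combining this with the previous paragraph gives injectivity of $\mathcal{D}$.

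I do not anticipate any significant obstacle: the argument is purely formal manipulation of the filtration and its chosen complements, together with the definition of $\mathcal{D}$. The only subtlety worth flagging is that one must verify that $\mathcal{D}(\alpha_d)$ really does land in the $d$-indexed block of the target (and not in blocks of other dimensions), which is immediate from how $\mathcal{D}$ was defined piece-by-piece on the $W_d$.
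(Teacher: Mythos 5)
Your proposal is correct and follows essentially the same argument as the paper: reduce to injectivity on each $W_d$ (which the paper states as ``it suffices to prove'' and you spell out via the block structure of the target), then observe that vanishing traces on $d$-dimensional faces place $\alpha$ in $V_{d+1}$, so $\alpha \in W_d \cap V_{d+1} = 0$.
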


\begin{proof}
  It suffices to prove that if $\alpha\in W_d$ and $\tr_{T^n, F}\alpha=0$ for all $d$-dimensional faces $F$, then $\alpha=0$. This trace condition implies that $\alpha\in V_{d+1}$ by definition. Since $\alpha$ is in both summands of $V_d=W_d\oplus V_{d+1}$, it must be zero.
\end{proof}

\begin{remark}
  We usually want the geometric decomposition map to be an isomorphism, but proving surjectivity requires assumptions. Indeed, one can check that the map $\mathcal D$ is not an isomorphism when $V=\cP_0\Lambda^k(T^n)$ and $0\le k<n$.
\end{remark}

\begin{proposition}\label{prop:Diso}
  Let $r\ge1$. If $V$ is $\cP_r\Lambda^k(T^n)$ or $\cP_r^-\Lambda^k(T^n)$, then the geometric decomposition map $\mathcal D$ is a vector space isomorphism.
\end{proposition}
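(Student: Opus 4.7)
The plan is to combine the injectivity already provided by Proposition~\ref{prop:injective} with a dimension count. Since $\mathcal D$ is an injective linear map between finite-dimensional vector spaces, it will be an isomorphism as soon as its domain and codomain have matching dimensions, and this is the only thing left to check.

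First, I would observe that the codomain of $\mathcal D$ has dimension
\[
\sum_{d=0}^n \binom{n+1}{d+1}\, \dim \oV(T^d),
\]
using that $T^n$ has $\binom{n+1}{d+1}$ faces of dimension $d$, each isomorphic as a simplex to $T^d$, and that for our choices of $V$ we have $\oV(F) \cong \oP_r\Lambda^k(T^d)$ or $\oV(F) \cong \oP_r^-\Lambda^k(T^d)$ correspondingly. Note that this expression is independent of the choice of the complements $W_d$, which is encouraging.

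The remaining task is to verify the combinatorial identity
\[
\dim \cP_r\Lambda^k(T^n) \;=\; \sum_{d=0}^n \binom{n+1}{d+1}\, \dim \oP_r\Lambda^k(T^d),
\]
together with its $\cP_r^-$ analogue. This identity is essentially the combinatorial heart of the geometric decomposition theorems of Arnold, Falk, and Winther \cite{afw09}; one approach is to cite the explicit dimension formulas there and verify the identity by a routine binomial-coefficient computation. The main obstacle is purely bookkeeping --- organizing the binomial sum so that the cancellations are visible. The hypothesis $r \geq 1$ enters precisely to exclude the degenerate case flagged in the preceding remark: for $V=\cP_0\Lambda^k(T^n)$ with $0\le k<n$, the right-hand side becomes too small (in fact, zero), and injectivity cannot be upgraded to an isomorphism.

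An alternative and more conceptual route, which the excerpt has already alluded to, is to construct an explicit right inverse of $\mathcal D$ using the consistent extension operators of \cite[Section 4]{afw09} (discussed in Appendix~\ref{sec:extension}). Given any tuple in $\bigoplus_F \oV(F)$, one would extend each component to a form on $T^n$ whose trace on lower-dimensional faces vanishes, and sum to produce a preimage; surjectivity would then follow without any dimension counting. I would present the dimension-count argument in the main text and defer this extension-operator construction to the appendix, in keeping with the structural choice the author has already made to keep the choice of $W_d$ abstract.
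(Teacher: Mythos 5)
Your proposal matches the paper's proof: it combines the injectivity of Proposition~\ref{prop:injective} with a dimension count of the domain and codomain, deferring the actual dimension identity to the dimension formulas of Arnold, Falk, and Winther (the paper cites \cite[Theorems 4.9, 4.13, 4.16, 4.22]{afw06}, but this is the same combinatorial content you point to in \cite{afw09}). The alternative route via extension operators you sketch is a valid variant, but your primary argument is exactly the one in the paper.
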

\begin{proof}
  In light of Proposition~\ref{prop:injective}, we only need to know that the dimensions of the domain and codomain of $\mathcal D$ are equal. This fact follows by combining \cite[Theorem 4.9, Theorem 4.13, Theorem 4.16, Theorem 4.22]{afw06}.
\end{proof}

\begin{remark}\label{rem:extension}
  In the case where the geometric decomposition map
  \begin{equation*}
    \mathcal D\colon V\to\bigoplus_{d=0}^n\bigoplus_{\substack{F\subseteq T^n\\\dim F = d}}\oV(F)=\bigoplus_{F\subseteq T^n}\oV(F)
  \end{equation*}
  is surjective (and thus an isomorphism), then for each face $F\subseteq T^n$ we obtain inverse maps
  \begin{equation*}
    E_{F,T^n}\colon\oV(F)\to V.
  \end{equation*}
  As suggested by the notation, these are extension operators, in the sense that $\tr_{T^n,F}\circ E_{F, T^n}$ is the identity, as we will soon see.
\end{remark}

\begin{proposition}
  Let $\alpha\in V$, and assume that the geometric decomposition map $\mathcal D$ sends $\alpha\in V$ to a single summand, that is, $\mathcal D(\alpha)=\beta$ for some $\beta\in\oV(F)$ for some $d$-dimensional face $F$ of $T^n$. Then the following holds.
  \begin{itemize}
  \item $\tr_{T^n, F}\alpha=\beta$.
  \item $\tr_{T^n,F'}=0$ for all faces $F'$ such that $\dim F'<d$.
  \item $\tr_{T^n,F'}=0$ for all faces $F'\neq F$ such that $\dim F'=d$.
  \end{itemize}
\end{proposition}

\begin{proof}
  Since the geometric decomposition map is defined as a direct sum of maps and is injective, if $\mathcal D(\alpha)$ is in a single summand then $\alpha$ itself must be in a single summand, namely $W_d$. Since $W_d\subseteq V_d$, the trace of $\alpha$ vanishes on all $(d-1)$-dimensional faces of $T^n$, and hence also on all faces of $T^n$ with even smaller dimension.

  Next, considering Equation~\eqref{eq:geodecomp} in Definition~\ref{def:geodecomp}, since $\mathcal D(\alpha)$ lands in a single summand, we have that $\tr_{T^n,F'}\alpha=0$ for all $d$-dimensional faces $F'$ of $T^n$ other than $F$.

  Finally, considering the summand that $\mathcal D(\alpha)$ does land in, we have that $\beta=\mathcal D(\alpha)=\tr_{T^n,F}\alpha$ as desired.
\end{proof}

\begin{corollary}\label{cor:extensionproperties}
  If the geometric decomposition map is an isomorphism and $E_{F,T^n}$ is defined as in Remark~\ref{rem:extension}, then if $\beta\in\oV(F)$, we have the following.
  \begin{itemize}
  \item $\tr_{T^n, F}E_{F, T^n}\beta=\beta$.
  \item $\tr_{T^n,F'}E_{F, T^n}\beta=0$ for all faces $F'$ such that $\dim F'<\dim F$.
  \item $\tr_{T^n,F'}E_{F, T^n}\beta=0$ for all faces $F'\neq F$ such that $\dim F'=\dim F$.
  \end{itemize}
\end{corollary}

\begin{proof}
  Let $\alpha=E_{F, T^n}\beta$. By definition of $E_{F, T^n}$ as an inverse map, we have $\mathcal D(\alpha)=\beta$, which lies in a single summand of $\bigoplus\oV(F)$.
\end{proof}

In particular, by Proposition~\ref{prop:Diso}, we have extension operators for the standard finite element spaces that enjoy all of the properties in Corollary~\ref{cor:extensionproperties}. However, we remark that the notion of a \emph{consistent family} of extension operators from \cite{afw09} is strictly stronger than what we have here. We discuss the relationship between these extension operators in Appendix~\ref{sec:extension}. Specifically, if we start with a consistent family of extension operators, then Proposition~\ref{prop:extensiontoW} shows how to choose the $W_d$ in Definition~\ref{def:W} to recover that notion of geometric decomposition here. On the other hand, Remark~\ref{rem:converse} highlights the fact that, for a consistent family of extension operators, $\tr_{T^n,F'}E_{F,T^n}\beta=0$ for all faces $F'$ not containing $F$, which is a stronger property than Corollary~\ref{cor:extensionproperties}. The remark gives a specific example showing that, at the cost of having weaker properties, the geometric decomposition theory in this paper is strictly more general than the geometric decomposition theory stemming from consistent families of extension operators.

\subsection{The action of $S_{n+1}$}\label{subsec:representations}

We now assume that $V$ is $S_{n+1}$-invariant and show that the geometric decomposition map is equivariant, assuming that the choice of the complements $W_d$ respects the symmetry.

First, observe that the spaces $V_d$ are also $S_{n+1}$-invariant. Indeed, the action of any $\pi\in S_{n+1}$ permutes the $(d-1)$-dimensional faces of $T^n$, so if $\alpha$ has vanishing trace on all of them, then so does $S_\pi^*\alpha$.

Next, we must choose the complements $W_d$ to be $S_{n+1}$-invariant. Finding invariant complements is always possible by Maschke's theorem, but we can also use the explicit choices given in Definition~\ref{def:W}. Indeed, observe that the inner product $(\alpha,\beta)\mapsto\int_{T^n}\langle\alpha,\beta\rangle$ is invariant under isometries, so if we set $W_d$ to be the orthogonal complement of $V_{d+1}$ in $V_d$, then $W_d$ is $S_{n+1}$-invariant. For the construction via extension operators, see Appendix~\ref{sec:extension}.

To show that the geometric decomposition map $\mathcal D$ is equivariant, we must first understand the natural action of the permutation group $S_{n+1}$ on the space $\bigoplus\oV(F)$. Our first observation is that for $\pi\in S_{n+1}$, the map $S_\pi$ sends a face $F$ to a face $F'$ (possibly the same as $F$) of the same dimension. Moreover, $S_\pi$ sends $\partial F$ to $\partial F'$. We thus have the following commutative diagram, yielding a corresponding diagram of pullback maps.
\begin{equation*}
  \begin{tikzcd}
    T^n&\arrow[l, "S_\pi", swap]T^n&&\Lambda^k(T^n)\arrow[r, "S_\pi^*"]\arrow[d,"\tr_{T^n, F'}"]&\Lambda^k(T^n)\arrow[d,"\tr_{T^n, F}"]\\
    F'\arrow[u, hook]&F\arrow[l, "S_\pi\rvert_F", swap]\arrow[u, hook]&&\Lambda^k(F')\arrow[r, "S_\pi^*"]\arrow[d,"\tr_{F',\partial F'}"]&\Lambda^k(F)\arrow[d,"\tr_{F,\partial F}"]\\
    \partial F'\arrow[u, hook]&\partial F\arrow[l, "S_\pi\rvert_{\partial F}", swap]\arrow[u, hook]&&\Lambda^k(\partial F')\arrow[r, "S_\pi^*"]&\Lambda^k(\partial F)
  \end{tikzcd}
\end{equation*}
Here we abuse notation and abbreviate $(S_\pi\rvert_F)^*$ and $(S_\pi\rvert_{\partial F})^*$ to just $S_\pi^*$. Next, because $V$ is $S_{n+1}$-invariant and because $V(F)$ is defined to be the image of $V$ under the trace map, we can restrict the above diagram to the following.
\begin{equation}\label{eq:Vdiagram}
  \begin{tikzcd}
    V\arrow[r, "S_\pi^*"]\arrow[d,"\tr_{T^n, F'}"]&V\arrow[d,"\tr_{T^n, F}"]\\
    V(F')\arrow[r, "S_\pi^*"]\arrow[d,"\tr_{F',\partial F'}"]&V(F)\arrow[d,"\tr_{F,\partial F}"]\\
    \Lambda^k(\partial F')\arrow[r, "S_\pi^*"]&\Lambda^k(\partial F)
  \end{tikzcd}
\end{equation}
Moreover, the lower part of the diagram shows that $S_\pi^*$ preserves the vanishing trace property; that is, if $\alpha\in\oV(F')$, then $S_\pi^*\alpha\in\oV(F)$. In short, we have a map $S_\pi^*\colon\oV(F')\to\oV(F)$, which induces an action
\begin{equation*}
  S_\pi^*\colon\bigoplus_{\substack{F\subseteq T^n\\\dim F = d}}\oV(F)\to\bigoplus_{\substack{F\subseteq T^n\\\dim F = d}}\oV(F).
\end{equation*}
It is at this point that we remark that $S_{n+1}$ acts transitively on the $d$-dimensional faces of $T^n$ and that the stabilizer of $T^d\subseteq T^n$ is $S_{d+1}\subseteq S_{n+1}$. Consequently, with respect to the above action, $\oV(T^d)$ is an $S_{d+1}$-invariant subspace, and each $\oV(F)$ is a translate of $\oV(T^d)$. We conclude that
\begin{equation*}
  \bigoplus_{\substack{F\subseteq T^n\\\dim F=d}}\oV(F)=\Ind_{S_{d+1}}^{S_{n+1}}\oV(T^d).
\end{equation*}

We can now prove our desired claim.
\begin{proposition}\label{prop:Dequi}
  If $V$ is $S_{n+1}$-invariant and the $W_d$ are chosen to be $S_{n+1}$-invariant subspaces, then the geometric decomposition map $\mathcal D$ is equivariant.
\end{proposition}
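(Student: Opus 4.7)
The plan is to verify equivariance componentwise, exploiting the setup the author has already assembled. Since both $\mathcal D$ and $S_\pi^*$ are linear and $V=\bigoplus_d W_d$ with each $W_d$ being $S_{n+1}$-invariant by hypothesis, it suffices to fix $d$, fix $\pi\in S_{n+1}$, take $\alpha\in W_d$, and check that $\mathcal D(S_\pi^*\alpha)=S_\pi^*\mathcal D(\alpha)$ as elements of $\bigoplus_{\dim F=d}\oV(F)$.

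For the left-hand side, the invariance of $W_d$ gives $S_\pi^*\alpha\in W_d$, so by Definition~\ref{def:geodecomp} its $F$-component is $\tr_{T^n,F}(S_\pi^*\alpha)$. For the right-hand side, I would unpack the action of $S_\pi^*$ on $\bigoplus_{\dim F=d}\oV(F)$ described just before the proposition: this action permutes summands according to $S_\pi$, so if I write $F'=S_\pi(F)$, then the $F$-component of $S_\pi^*\mathcal D(\alpha)$ is the image under $S_\pi^*\colon\oV(F')\to\oV(F)$ of the $F'$-component of $\mathcal D(\alpha)$, namely $S_\pi^*(\tr_{T^n,F'}\alpha)$. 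Equating the two components reduces the claim to the identity
\begin{equation*}
  \tr_{T^n,F}\circ S_\pi^*=S_\pi^*\circ\tr_{T^n,F'},\qquad F'=S_\pi(F),
\end{equation*}
which is precisely the commutativity of the top square of diagram~\eqref{eq:Vdiagram}.

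There isn't really a hard part here; the entire content is bookkeeping to match up the permutation of face-indexed summands on the codomain side with the face-by-face pullback identity. The only thing worth being careful about is the direction of the indexing: the map $S_\pi$ sends $F$ to $F'$, but the pullback $S_\pi^*$ runs the other way, from $\oV(F')$ to $\oV(F)$, which is exactly why the $F$-component on the right uses the $F'$-component of $\mathcal D(\alpha)$ rather than the $F$-component. Once that convention is fixed, the computation in the previous paragraph closes the proof in one line.
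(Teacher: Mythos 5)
Your proof is correct and takes essentially the same route as the paper: both reduce the claim, via the $S_{n+1}$-invariance of each $W_d$, to the commutativity of the top square of diagram~\eqref{eq:Vdiagram} applied face by face. You simply spell out the component-indexing bookkeeping that the paper leaves implicit when it says ``restrict diagram~\eqref{eq:Vdiagram} to $W_d$.''
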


\begin{proof}
  Since the $W_d$ are assumed to be $S_{n+1}$-invariant, it suffices to show equivariance of the map
  \begin{align*}
    W_d&\to\bigoplus_{\substack{F\subseteq T^n\\\dim F = d}}\oV(F)\\
    \alpha&\mapsto\bigoplus_{\substack{F\subseteq T^n\\\dim F = d}}\tr_{T^n, F}\alpha
  \end{align*}
  This claim follows by restricting diagram~\eqref{eq:Vdiagram} to $W_d$, obtaining
  \begin{equation*}
    \begin{tikzcd}
      W_d\arrow[r, "S_\pi^*"]\arrow[d,"\tr_{T^n, F'}"]&W_d\arrow[d,"\tr_{T^n, F}"]\\
      \oV(F')\arrow[r, "S_\pi^*"]\arrow[d,"\tr_{F',\partial F'}"]&\oV(F)\arrow[d,"\tr_{F,\partial F}"]\\
      0\arrow[r, "S_\pi^*"]&0
    \end{tikzcd}
  \end{equation*}
  Here, $\dim F=d$, and we used Lemma~\ref{lemma:traceW}, along with the above observation that $S_\pi^*$ sends $\oV(F')$ to $\oV(F)$.
\end{proof}

\begin{corollary}\label{cor:geodecomp}
  For $r\ge1$, we have the following isomorphisms of $S_{n+1}$-representations.
  \begin{align*}
    \cP_r\Lambda^k(T^n)&\cong\bigoplus_{d=0}^n\bigoplus_{\substack{F\subseteq T^n\\\dim F = d}}\oP_r\Lambda^k(F)\cong\bigoplus_{d=0}^n\Ind_{S_{d+1}}^{S_{n+1}}\oP_r\Lambda^k(T^d),\\
    \cP_r^-\Lambda^k(T^n)&\cong\bigoplus_{d=0}^n\bigoplus_{\substack{F\subseteq T^n\\\dim F = d}}\oP_r^-\Lambda^k(F)\cong\bigoplus_{d=0}^n\Ind_{S_{d+1}}^{S_{n+1}}\oP_r^-\Lambda^k(T^d).
  \end{align*}
\end{corollary}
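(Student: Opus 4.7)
The plan is to assemble the corollary by combining the three main results of Section~\ref{sec:geodecomp}: the vector-space isomorphism of Proposition~\ref{prop:Diso}, the equivariance of Proposition~\ref{prop:Dequi}, and the identification of the codomain of $\mathcal{D}$ as an induced representation given in the discussion preceding Proposition~\ref{prop:Dequi}. There is no genuine obstacle; the corollary is really a bookkeeping statement that packages the preceding development.

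First I would set $V=\cP_r\Lambda^k(T^n)$ (respectively $V=\cP_r^-\Lambda^k(T^n)$), which is $S_{n+1}$-invariant, and record via the example following Definition~\ref{def:Vd} that $\oV(F)=\oP_r\Lambda^k(F)$ (respectively $\oP_r^-\Lambda^k(F)$). Next I would choose the complements $W_d$ in Definition~\ref{def:W} to be $S_{n+1}$-invariant; one may either take orthogonal complements with respect to the $S_{n+1}$-invariant inner product $(\alpha,\beta)\mapsto\int_{T^n}\langle\alpha,\beta\rangle$, or invoke Maschke's theorem. This is the only step where care is needed, since the geometric decomposition map $\mathcal{D}$ depends on the choice of the $W_d$; fixing an $S_{n+1}$-invariant choice is exactly the hypothesis of Proposition~\ref{prop:Dequi}, from which we conclude that $\mathcal{D}$ is equivariant.

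Because $r\ge1$, Proposition~\ref{prop:Diso} tells us that $\mathcal{D}$ is a vector-space isomorphism. Combining this with equivariance yields an isomorphism of $S_{n+1}$-representations
\begin{equation*}
  V\;\cong\;\bigoplus_{d=0}^n\bigoplus_{\substack{F\subseteq T^n\\\dim F=d}}\oV(F),
\end{equation*}
which under the identifications above gives the first isomorphism on each line of the statement. For the second isomorphism, I would invoke the observation made just before Proposition~\ref{prop:Dequi}: $S_{n+1}$ acts transitively on the $d$-dimensional faces of $T^n$, the stabilizer of the reference face $T^d\subseteq T^n$ is $S_{d+1}\subseteq S_{n+1}$, and each $\oV(F)$ is a translate of $\oV(T^d)$, so
\begin{equation*}
  \bigoplus_{\substack{F\subseteq T^n\\\dim F=d}}\oV(F)\;\cong\;\Ind_{S_{d+1}}^{S_{n+1}}\oV(T^d).
\end{equation*}
Summing over $d$ and specializing $\oV(T^d)$ to $\oP_r\Lambda^k(T^d)$ or $\oP_r^-\Lambda^k(T^d)$ completes the proof.
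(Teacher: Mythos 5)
Your proposal matches the paper's proof exactly: the paper cites Proposition~\ref{prop:Diso} for the vector-space isomorphism and Proposition~\ref{prop:Dequi} for equivariance, with the induced-representation identification coming from the discussion that precedes Proposition~\ref{prop:Dequi}. You have simply spelled out the bookkeeping (choice of $S_{n+1}$-invariant $W_d$, specialization of $\oV(F)$) that the paper leaves implicit.
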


\begin{proof}
  Proposition~\ref{prop:Diso} tells us that $\mathcal D$ is a vector space isomorphism and Proposition~\ref{prop:Dequi} tells us that $\mathcal D$ is equivariant.
\end{proof}

As a result, we obtain the following claim, which is also discussed by Licht in \cite[Section 8]{li19}.

\begin{corollary}\label{cor:induct}
  Let $r\ge1$. If $\oP_r\Lambda^k(T^d)$ has an $S_{d+1}$-invariant basis up to sign for every $d\le n$, then $\cP_r\Lambda^k(T^n)$ has an $S_{n+1}$-invariant basis up to sign. Likewise, the same holds for the $\cP^-$ spaces.
\end{corollary}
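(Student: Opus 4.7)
The plan is to assemble the result from three ingredients that are already in place: the geometric decomposition identifying $\cP_r\Lambda^k(T^n)$ as an $S_{n+1}$-representation, the fact that induction preserves the existence of an invariant basis, and the fact that direct sums do too.

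First I would invoke Corollary~\ref{cor:geodecomp}, which, under the assumption $r\ge1$, gives the isomorphism of $S_{n+1}$-representations
\begin{equation*}
  \cP_r\Lambda^k(T^n)\cong\bigoplus_{d=0}^n\Ind_{S_{d+1}}^{S_{n+1}}\oP_r\Lambda^k(T^d),
\end{equation*}
so that it suffices to exhibit an $S_{n+1}$-invariant basis up to sign for each summand $\Ind_{S_{d+1}}^{S_{n+1}}\oP_r\Lambda^k(T^d)$.

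Next, I would apply Corollary~\ref{cor:induce} with $G=S_{n+1}$, $K=S_{d+1}$, and $W=\oP_r\Lambda^k(T^d)$. The hypothesis of the corollary being proved states precisely that for every $d\le n$ the space $\oP_r\Lambda^k(T^d)$ admits an $S_{d+1}$-invariant basis up to sign, so Corollary~\ref{cor:induce} furnishes an $S_{n+1}$-invariant basis up to sign for the induced representation $\Ind_{S_{d+1}}^{S_{n+1}}\oP_r\Lambda^k(T^d)$.

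Finally, I would iterate Proposition~\ref{prop:sumtensor} (the direct-sum half) across $d=0,\dotsc,n$ to glue these bases into an $S_{n+1}$-invariant basis up to sign of the whole direct sum, which by the isomorphism above is an invariant basis of $\cP_r\Lambda^k(T^n)$. The $\cP^-_r\Lambda^k(T^n)$ case is identical, using the second line of Corollary~\ref{cor:geodecomp}. There is no real obstacle here: all the machinery has been set up, and the corollary is essentially a one-line deduction. The only mild bookkeeping point is that one must remember the hypothesis $r\ge1$ needed to apply Proposition~\ref{prop:Diso}, and hence Corollary~\ref{cor:geodecomp}; this matches the hypothesis of the statement.
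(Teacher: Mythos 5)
Your proposal is correct and follows the paper's own proof exactly: both use Corollary~\ref{cor:geodecomp} to rewrite $\cP_r\Lambda^k(T^n)$ as a direct sum of induced representations, then combine Corollary~\ref{cor:induce} (induction preserves invariant bases) with Proposition~\ref{prop:sumtensor} (direct sums preserve them). You have merely spelled out the steps that the paper states in one line.
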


\begin{proof}
  Proposition~\ref{prop:induce} tells us that the property of having an invariant basis up to sign is preserved under induction, and Proposition~\ref{prop:sumtensor} tells us that this property is preserved under direct sums.
\end{proof}

As Licht discusses at the end of \cite{li19}, the completeness of his invariant basis construction is equivalent to the converse of Corollary~\ref{cor:induct}. As we will see, the converse holds in dimension two, but not in dimension three.

\section{Invariant bases}\label{sec:results}
Here, we prove which finite element exterior calculus spaces have invariant bases and which do not, first in dimension two and then in dimension three. At the end, we give some remarks about explicitly constructing these bases when it is possible to do so, as well as how one could approach proving analogous results in higher dimensions.

\subsection{Base cases}\label{subsec:invariant}
As illustrated by our example in the introduction and noted in \cite{li19}, the case of scalar functions is easy.
\begin{proposition}\label{prop:k0n}
  The spaces $\cP_r\Lambda^0(T^n)=\cP_r^-\Lambda^0(T^n)$ and the spaces $\cP_r\Lambda^n(T^n)=\cP_{r+1}^-\Lambda^n(T^n)$ always have $S_{n+1}$-invariant bases up to sign.
\end{proposition}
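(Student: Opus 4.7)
The plan is to exhibit explicit monomial-style bases and verify their symmetry directly. For the $0$-form case I would use the Bernstein basis of homogeneous monomials in the barycentric coordinates; for the $n$-form case I would multiply that basis by a judiciously chosen volume form on $T^n$ whose pullback picks up exactly the sign of the permutation.

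For $\cP_r\Lambda^0(T^n)$, I would take the Bernstein basis
\[
B := \{\lambda_0^{\alpha_0}\cdots\lambda_n^{\alpha_n} : \alpha_i \geq 0,\ \alpha_0+\cdots+\alpha_n=r\}.
\]
First I would check that $B$ is a basis: its cardinality $\binom{n+r}{n}$ matches $\dim\cP_r\Lambda^0(T^n)$, and linear independence can be verified by parametrizing $T^n$ via $\lambda_1,\ldots,\lambda_n$ (with $\lambda_0=1-\lambda_1-\cdots-\lambda_n$) to reduce to the standard Bernstein polynomial basis in $n$ variables. Then, since $S_\pi^*\lambda_i = \lambda_{\pi(i)}$, the map $S_\pi^*$ sends $\lambda^\alpha$ to another monomial in $B$, so $B$ is permuted (with $+$ signs only) by $S_{n+1}$. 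The equality $\cP_r\Lambda^0 = \cP_r^-\Lambda^0$ is from Table~\ref{tab:classicalfeec}, so the same basis works for both (the degenerate case $r=0$ with $\cP_0^-\Lambda^0=0$ is vacuous).

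For $\cP_r\Lambda^n(T^n)$, the key idea is to select a volume form that transforms by the sign representation. I would use
\[
\omega := i_X(d\lambda_0 \wedge d\lambda_1 \wedge \cdots \wedge d\lambda_n)\big|_{T^n}.
\]
Three checks are then needed. First, $\omega$ is nowhere vanishing on $T^n$: tangent vectors satisfy $v_0+\cdots+v_n=0$, while $X$ has coordinates summing to $1$ on $T^n$, so $X$ is transverse to $T^n$ and the contraction is nondegenerate. Second, multiplication by $\omega$ is an isomorphism $\cP_r\Lambda^0(T^n) \xrightarrow{\sim} \cP_r\Lambda^n(T^n)$, so $\{\lambda^\alpha\omega : |\alpha|=r\}$ is a basis of the latter. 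Third, $S_\pi^*\omega = \operatorname{sign}(\pi)\,\omega$: since $S_\pi$ is linear and $X$ is the position vector field, $S_\pi$ preserves $X$, so contraction commutes with pullback; and $S_\pi^*(d\lambda_0\wedge\cdots\wedge d\lambda_n) = d\lambda_{\pi(0)}\wedge\cdots\wedge d\lambda_{\pi(n)} = \operatorname{sign}(\pi)\,d\lambda_0\wedge\cdots\wedge d\lambda_n$. Combining these, $S_\pi^*(\lambda^\alpha\omega) = \pm\lambda^{\alpha'}\omega$ for another multi-index $\alpha'$ of total degree $r$, so this basis is invariant up to sign. The identification $\cP_r\Lambda^n = \cP_{r+1}^-\Lambda^n$ from Table~\ref{tab:classicalfeec} means the same basis serves both spaces.

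There is no substantive obstacle; the only mild subtlety is the choice of a symmetric volume form. The naive candidate $d\lambda_1\wedge\cdots\wedge d\lambda_n$ breaks the $S_{n+1}$-symmetry by singling out $\lambda_0$, while $i_X(d\lambda_0\wedge\cdots\wedge d\lambda_n)$ restores symmetry at the cost of a sign twist under odd permutations. This twist is precisely why the weaker notion of \emph{invariance up to sign} is the correct concept in the top-degree case, as already motivated in the introduction by the $\cP_2\Lambda^3(T^3)$ example.
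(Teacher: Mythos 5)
Your proof is correct and takes essentially the same approach as the paper: the paper also uses the degree-$r$ monomials in $\lambda_0,\dotsc,\lambda_n$ for $\cP_r\Lambda^0(T^n)$ and multiplies them by the volume form for $\cP_r\Lambda^n(T^n)$. The only difference is that you make the volume form explicit as $i_X(d\lambda_0\wedge\dotsb\wedge d\lambda_n)\rvert_{T^n}$ and verify the sign-twist directly, whereas the paper simply invokes ``the volume form on $T^n$'' and leaves the check to the reader.
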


\begin{proof}
  One can check that the monomials in the variables $\lambda_0,\dotsc,\lambda_n$ of degree exactly $r$ form a basis for $\cP_r\Lambda^0(T^n)$; this basis is $S_{n+1}$-invariant. Multiplying these monomials by the volume form on $T^n$, we obtain a basis for $\cP_r\Lambda^n(T^n)$; this basis is $S_{n+1}$-invariant up to sign.
\end{proof}

Hence, our focus will be on vector fields, which in dimension two correspond to one-forms and in dimension three correspond to one-forms or two-forms.

In three dimensions, something very special happens: there is a basis of $\R^3$ that is invariant up to sign with respect to the group $S_4$ of symmetries of the regular tetrahedron. A geometric way of seeing this basis is as follows. The tetrahedron has three pairs of opposite edges. For each such pair, consider the vector joining the midpoints of the two edges. (This vector is determined up to sign, which can be chosen arbitrarily.) One can see that these three vectors form a basis of $\R^3$. Moreover, the symmetries of a tetrahedron send pairs of opposite edges to pairs of opposite edges, and hence preserve this basis up to sign. An alternative geometric viewpoint is that, starting from a cube, we can select four vertices so that they are pairwise non-adjacent. These vertices determine a regular tetrahedron, so the group of symmetries of the regular tetrahedron is a subgroup of the group of symmetries of a cube. From this perspective, the edge directions of the cube are the invariant basis up to sign.

Recalling Example~\ref{eg:equilateral}, the above discussion shows that $\mb 3$ has an $S_4$-invariant basis up to sign, and thus so does $\mb 3\otimes\mb1'$ by Proposition~\ref{prop:tensorsign}. Recalling Propositions~\ref{prop:p0l1} and \ref{prop:p0ln-1}, we obtain the following result, which also appears in \cite{li19} with algebraic justification.

\begin{proposition}[{\cite[Lemmas 9 and 10]{li19}}]\label{prop:constdim3}
  The spaces $\cP_0\Lambda^1(T^3)$ and $\cP_0\Lambda^2(T^3)$ have $S_4$-invariant bases up to sign.
\end{proposition}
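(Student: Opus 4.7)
The plan is to follow the author's geometric hint and assign, for each space, one basis vector (up to sign) to each of the three partitions of the vertex set $\{0,1,2,3\}$ into two pairs---equivalently, each pair of opposite edges of the tetrahedron. The symmetric group $S_4$ permutes these three partitions, and the argument reduces to showing that the chosen representatives transform with at most a sign.

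For $\cP_0\Lambda^1(T^3)$, the constraint $\lambda_0+\lambda_1+\lambda_2+\lambda_3=1$ yields the single relation $d\lambda_0+d\lambda_1+d\lambda_2+d\lambda_3=0$ among constant $1$-forms on $T^3$, so $\cP_0\Lambda^1(T^3)$ is $3$-dimensional. For each partition $\{\{i,j\},\{k,l\}\}$, I would take $d\lambda_i+d\lambda_j$ as the basis vector; since this equals $-(d\lambda_k+d\lambda_l)$ on $T^3$, it is determined by the partition only up to an overall sign. A short direct check shows that the three resulting $1$-forms are linearly independent and hence form a basis. For $\pi\in S_4$, the pullback $S_\pi^*$ sends $d\lambda_i+d\lambda_j$ to $d\lambda_{\pi(i)}+d\lambda_{\pi(j)}$, which is, up to sign, the basis vector for the permuted partition. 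This gives $S_4$-invariance up to sign.

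For $\cP_0\Lambda^2(T^3)$, I would transport the preceding basis via Hodge duality on $T^3$, equipped with its induced Euclidean metric and the orientation inherited from $\R^4$. The Hodge star $*\colon\Lambda^1(T^3)\to\Lambda^2(T^3)$ is a linear isomorphism that commutes with orientation-preserving pullbacks and anti-commutes with orientation-reversing ones; since the sign of a permutation $\pi\in S_4$ is precisely the sign of the determinant of $S_\pi$, applying $*$ to the basis from the previous paragraph yields a basis of $\cP_0\Lambda^2(T^3)$ that is again invariant up to sign. (One could alternatively write such a basis down directly---for instance, take $d\lambda_i\wedge d\lambda_j$ as a representative for the partition $\{\{i,j\},\{k,l\}\}$---but Hodge duality packages the sign bookkeeping most cleanly.) I expect no real obstacle beyond this bookkeeping: in both cases, the ambiguities that arise are exactly the "up to sign" slack allowed by Definition~\ref{def:invariantuptosign}.
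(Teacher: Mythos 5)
Your main argument is correct and follows the same idea the paper sketches: index basis vectors by the three partitions of $\{0,1,2,3\}$ into pairs (equivalently, pairs of opposite edges), which is precisely the geometric description of the representation the paper calls $\mb3$; the explicit $1$-forms $d\lambda_i+d\lambda_j$ realize this, and the Hodge-star step for $\cP_0\Lambda^2(T^3)$ is the concrete form of the paper's identification $\cP_0\Lambda^2(T^3)\cong\mb3\otimes\mb1'$ followed by Proposition~\ref{prop:sumtensor}. One small slip in the parenthetical: taking $d\lambda_i\wedge d\lambda_j$ as the representative for the partition $\{\{i,j\},\{k,l\}\}$ is not well-defined even up to sign, because on $T^3$ (where $\sum_m d\lambda_m=0$) one computes, for example,
\begin{equation*}
  d\lambda_0\wedge d\lambda_1 = d\lambda_1\wedge d\lambda_2 + d\lambda_1\wedge d\lambda_3 \neq \pm\, d\lambda_2\wedge d\lambda_3,
\end{equation*}
so the two pairs in the same partition give genuinely different $2$-forms, and a permutation that swaps the two pairs would take one to the other. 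Your Hodge-dual construction sidesteps this correctly, so the proof stands; just drop or rework the ``direct'' alternative.
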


This proposition has far-reaching consequences.

\begin{corollary}\label{cor:3dbasis}
  The spaces $\cP_r\Lambda^k(T^3)$ have $S_4$-invariant bases up to sign for all $r$ and $k$.
\end{corollary}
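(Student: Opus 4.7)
The plan is to produce an $S_4$-equivariant isomorphism
\begin{equation*}
  \mu\colon\cP_r\Lambda^0(T^3)\otimes\cP_0\Lambda^k(T^3)\to\cP_r\Lambda^k(T^3),\qquad f\otimes\omega\mapsto f\omega,
\end{equation*}
and then read the corollary directly off the three preceding propositions. Assuming the isomorphism, Proposition~\ref{prop:k0n} furnishes an $S_4$-invariant basis of $\cP_r\Lambda^0(T^3)$; Proposition~\ref{prop:constdim3} (together with Proposition~\ref{prop:k0n} to handle the degenerate cases $k=0,3$) furnishes one for $\cP_0\Lambda^k(T^3)$; and the tensor-product half of Proposition~\ref{prop:sumtensor} assembles these into an invariant basis of the tensor product, whose image under the $S_4$-equivariant isomorphism $\mu$ is the desired invariant basis of $\cP_r\Lambda^k(T^3)$.

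To verify that $\mu$ is an $S_4$-equivariant isomorphism, I would check equivariance, surjectivity, and injectivity separately. Equivariance is automatic: the $S_4$-action is by pullback, which distributes over pointwise multiplication by scalars. Surjectivity is immediate from the definition of $\cP_r\Lambda^k(T^3)$, since every element is the restriction of some $\sum_I f_I\,d\lambda_{i_1}\wedge\dotsb\wedge d\lambda_{i_k}\in\cP_r\Lambda^k(\R^4)$, which is visibly a sum of products of a polynomial coefficient and a constant $k$-form. For injectivity, I would fix any $\R$-basis $\eta_1,\dotsc,\eta_m$ of $\cP_0\Lambda^k(T^3)$---such as the wedges $d\lambda_{i_1}\wedge\dotsb\wedge d\lambda_{i_k}$ with $1\leq i_1<\dotsb<i_k\leq 3$, obtained by eliminating $d\lambda_0=-(d\lambda_1+d\lambda_2+d\lambda_3)$ on $T^3$---and rewrite a general tensor as $\sum_i g_i\otimes\eta_i$. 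Since these $\eta_i$ are a pointwise frame for $\Lambda^k(T^3)$ over $C^\infty(T^3)$, the equation $\sum_i g_i\eta_i=0$ forces each $g_i$ to vanish identically on $T^3$, so $g_i=0$ in $\cP_r\Lambda^0(T^3)$.

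The main point of substance is the pointwise-frame observation underlying injectivity; everything else is formal bookkeeping, and no delicate representation theory is required once the isomorphism is in hand. I would also emphasize that this tensor-product reduction is genuinely specific to dimension three through its reliance on Proposition~\ref{prop:constdim3}: the analogous strategy fails in lower dimensions because $\cP_0\Lambda^k(T^n)$ need not admit an invariant basis---for instance $\cP_0\Lambda^1(T^2)$ is the standard two-dimensional irreducible $S_3$-representation, which is not a signed permutation representation---so one should not expect this shortcut to survive beyond $n=3$.
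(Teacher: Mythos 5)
Your proof is correct and follows exactly the route the paper takes: the paper's proof is the one-line observation that $\cP_r\Lambda^k(T^3)\cong\cP_r\Lambda^0(T^3)\otimes\cP_0\Lambda^k(T^3)$ combined with Propositions~\ref{prop:k0n}, \ref{prop:constdim3}, and \ref{prop:sumtensor}. You merely spell out the verification that the multiplication map is an $S_4$-equivariant isomorphism, which the paper leaves to the reader.
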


\begin{proof}
  Observe that $\cP_r\Lambda^k(T^3)\cong\cP_r\Lambda^0(T^3)\otimes\cP_0\Lambda^k(T^3)$ and apply Propositions~\ref{prop:sumtensor}, \ref{prop:k0n}, and \ref{prop:constdim3}.
\end{proof}

\subsection{The obstruction}\label{subsec:Z3}
Our main results include not only constructions of invariant bases but also obstructions to the existence of invariant bases. Fundamentally, our obstruction stems from the following easy fact: There is no basis for $\R^2$ that is invariant up to sign with respect to the group of symmetries $S_3$ of the equilateral triangle. Indeed, it suffices to consider the subgroup $\Z/3$ of $S_3$; there is no way to pick two vectors in $\R^2$ so that $120^\circ$ rotations take each of them to the other up to sign. Note the contrast between the situation here and the exceptional situation in dimension three discussed in Subsection~\ref{subsec:invariant}.

Recalling Corollary~\ref{cor:z3decomposition}, any finite-dimensional representation of $\Z/3$ is isomorphic to $m\mb1\oplus n\mb2$ for some nonnegative integers $m$ and $n$. The main result of this subsection is the following characterization of which $\Z/3$-representations have invariant bases up to sign.

\begin{proposition}\label{prop:obstruction}
  Let $V\cong m\mb1\oplus n\mb2$ be a $\Z/3$-representation. Then $V$ has a basis that is $\Z/3$-invariant up to sign if and only if $m\ge n$.
\end{proposition}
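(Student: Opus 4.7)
The plan is to apply Proposition~\ref{prop:invariantinduced}, which reduces the existence of an invariant basis (up to sign) to a decomposition into representations induced from one-dimensional subrepresentations of subgroups. The group $\Z/3$ has only two subgroups, namely $\{e\}$ and $\Z/3$ itself, so I expect the argument to boil down to enumerating the possible induced building blocks.

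First I would handle the easy direction ($m\ge n$ implies existence of a basis). The trivial representation $\mb1$ has the obvious invariant basis $\{1\}$, and the regular representation $\mb3$ has an invariant basis given by the standard coordinate vectors, which $\Z/3$ merely cyclically permutes. Since $\mb3\cong\mb1\oplus\mb2$, whenever $m\ge n$ we can write $V\cong n\mb3\oplus(m-n)\mb1$, and Proposition~\ref{prop:sumtensor} then gives an invariant basis on the direct sum.

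For the converse, suppose $V$ has a $\Z/3$-invariant basis up to sign. By Proposition~\ref{prop:invariantinduced}, $V\cong\bigoplus_i\Ind_{H_i}^{\Z/3}L_i$ with each $H_i\le\Z/3$ and $L_i$ a one-dimensional real $H_i$-representation. I would enumerate the possibilities: if $H_i=\Z/3$, the only one-dimensional real representation is the trivial one $\mb1$ (the other irreducible real representation $\mb2$ is two-dimensional), and induction from $\Z/3$ to itself is the identity, contributing a copy of $\mb1$. If $H_i=\{e\}$, then $L_i=\R$, and $\Ind_{\{e\}}^{\Z/3}\R$ is the regular representation $\mb3\cong\mb1\oplus\mb2$. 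Thus any such $V$ must decompose as $V\cong a\,\mb3\oplus b\,\mb1\cong(a+b)\mb1\oplus a\mb2$ for some nonnegative integers $a,b$, forcing $m=a+b\ge a=n$.

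The main thing to be careful about is that we are working over $\R$, so I need to note explicitly that the one-dimensional complex characters of $\Z/3$ other than the trivial one do not descend to real one-dimensional representations; their real forms combine to give $\mb2$. Once that observation is made, the enumeration of induced pieces is tight and both directions fall out immediately.
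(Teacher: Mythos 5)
Your proof is correct and follows essentially the same route as the paper: both directions go through Proposition~\ref{prop:invariantinduced}, enumerating the two subgroups of $\Z/3$ and noting that each contributes only $\mb1$ or $\mb3$. Your explicit remark that the nontrivial complex characters of $\Z/3$ do not give real one-dimensional representations is a worthwhile clarification of a point the paper states without comment.
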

\begin{proof}
  If $m\ge n$, then
  \begin{equation*}
    V\cong(m-n)\mb1\oplus n\mb1\oplus n\mb2\cong(m-n)\mb1\oplus n(\mb1\oplus\mb2)\cong(m-n)\mb1\oplus n\mb3
  \end{equation*}
  Recalling Proposition~\ref{prop:13invariant}, $\mb1$ and $\mb3$ have $\Z/3$-invariant bases, so $V$ has an invariant basis by Proposition~\ref{prop:sumtensor}.

  Conversely, assume that $V$ has a basis that is $\Z/3$-invariant up to sign. Then Proposition~\ref{prop:invariantinduced} tells us that $V=\bigoplus_i\Ind_{H_i}^{\Z/3}L_i$, where $H_i$ is a subgroup of $\Z/3$ and $L_i$ is a one-dimensional representation of $H_i$. The group $\Z/3$ has only two subgroups, the trivial group and itself.

  Consider the case where $H_i$ is the trivial group $e$. The only one-dimensional representation of the trivial group is $L_i=\mb1$. Thus, in this case, $\Ind_{H_i}^{\Z/3}L_i\cong\mb3$ by Corollary~\ref{cor:ind13}. Consider now the case where $H_i=\Z/3$. Once again, the only one-dimensional representation of $\Z/3$ is the trivial representation $L_i=\mb1$. In this case, $\Ind_{H_i}^{\Z/3}L_i=\Ind_{\Z/3}^{\Z/3}\mb1=\mb1$.

  Therefore, $V\cong k\mb1\oplus n\mb3$ for some nonnegative integers $k$ and $n$. We compute
  \begin{equation*}
    V\cong k\mb1\oplus n(\mb1\oplus\mb2)\cong(k+n)\mb1\oplus n\mb2,
  \end{equation*}
  and we observe that $k+n\ge n$ as desired.
\end{proof}

\begin{corollary}\label{cor:obstruction}
  For any $n$, a representation $V$ has a $\Z/3$-invariant basis up to sign if and only if $V\oplus n\mb3$ does.
\end{corollary}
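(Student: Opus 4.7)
The plan is to reduce everything to the numerical criterion of Proposition~\ref{prop:obstruction} by decomposing $V$ into its isotypic components. Since $\Z/3$ has only two irreducible representations, $\mb1$ and $\mb2$, we can write $V\cong a\mb1\oplus b\mb2$ for some nonnegative integers $a,b$. The key identity $\mb3\cong\mb1\oplus\mb2$ established in the preceding proposition then gives
\begin{equation*}
  V\oplus n\mb3\cong a\mb1\oplus b\mb2\oplus n\mb1\oplus n\mb2\cong(a+n)\mb1\oplus(b+n)\mb2.
\end{equation*}

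With this decomposition in hand, the corollary follows by applying Proposition~\ref{prop:obstruction} twice. On one side, $V$ admits a $\Z/3$-invariant basis up to sign if and only if $a\ge b$. On the other side, $V\oplus n\mb3$ admits such a basis if and only if $a+n\ge b+n$, which is manifestly equivalent to $a\ge b$. Hence both conditions reduce to the same inequality, proving the biconditional.

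I do not anticipate any real obstacle here; the only thing to be careful about is notational clash, since the $n$ in the corollary statement plays a different role from the $n$ used in the statement of Proposition~\ref{prop:obstruction}. Renaming the multiplicities in the decomposition of $V$ to, say, $a$ and $b$ avoids this entirely. The proof is essentially a one-line consequence of $\mb3\cong\mb1\oplus\mb2$ and the explicit criterion from Proposition~\ref{prop:obstruction}.
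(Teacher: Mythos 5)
Your proof is correct and is essentially the argument the paper intends: the corollary is stated without proof as an immediate consequence of Proposition~\ref{prop:obstruction}, and the decomposition $V\cong a\mb1\oplus b\mb2$, together with $\mb3\cong\mb1\oplus\mb2$, reduces both sides to the same inequality $a\ge b$, exactly as you wrote. The note about avoiding the clash between the $n$ in the corollary and the $n$ in the proposition's statement is a sensible housekeeping point but does not change the substance.
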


Finally, we will make frequent use of the following easy result.

\begin{proposition}\label{prop:3s}
  Let $V$ be a $\Z/3$-representation, and let $g$ be a generator of $\Z/3$. Assume that we have a vector space decomposition $V=V_0\oplus V_1\oplus V_2$ such that $g(V_0)=V_1$, $g(V_1)=V_2$, and $g(V_2)=V_0$. Then, as $\Z/3$-representations, we have $V\cong(\dim V_0)\mb3$.
\end{proposition}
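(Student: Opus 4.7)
The plan is to construct an explicit $\Z/3$-equivariant decomposition of $V$ into $d := \dim V_0$ copies of $\mb3$ by using a basis of $V_0$ and cycling it under $g$.

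First, I would observe that since $g$ acts on $V$, it induces a linear isomorphism from each $V_i$ to $V_{i+1 \bmod 3}$. In particular, all three summands have the same dimension $d$, and applying $g$ three times returns any element of $V_i$ to itself (since the $\Z/3$-action satisfies $g^3 = 1$). Next, I would fix a basis $\xi_1, \dotsc, \xi_d$ of $V_0$. Because $g \colon V_0 \to V_1$ and $g^2 \colon V_0 \to V_2$ are vector space isomorphisms, the set $\{g^j \xi_i\}$ for $i = 1, \dotsc, d$ and $j = 0, 1, 2$ forms a basis of $V$, with $\{g^j \xi_i\}_{i}$ serving as a basis of $V_j$.

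Now consider, for each $i$, the subspace $W_i := \Span\{\xi_i, g\xi_i, g^2 \xi_i\}$. This subspace is $\Z/3$-invariant because $g$ cyclically permutes its spanning vectors (using $g^3 \xi_i = \xi_i$), and it is three-dimensional because its three spanning vectors lie in the three different summands $V_0, V_1, V_2$ and are each nonzero. Since $g$ permutes the basis $\{\xi_i, g\xi_i, g^2 \xi_i\}$ of $W_i$ cyclically, $W_i$ is isomorphic to the regular representation $\mb3$ by the definition of $\mb3$ given earlier in the paper.

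Finally, I would conclude that the basis $\{g^j \xi_i\}$ of $V$ partitions into the $W_i$, so
\begin{equation*}
  V = \bigoplus_{i=1}^d W_i \cong d \cdot \mb3 = (\dim V_0) \mb3,
\end{equation*}
as desired. There is no real obstacle here; the only thing to be careful about is noting that $W_i$ is genuinely three-dimensional, which follows immediately from the hypothesis that the $V_j$ are given as a direct sum.
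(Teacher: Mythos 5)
Your proof is correct and follows essentially the same route as the paper's primary argument: pick a basis $\xi_1,\dotsc,\xi_d$ of $V_0$, cycle it under $g$, and decompose $V$ into the invariant three-dimensional subspaces $\Span\{\xi_i, g\xi_i, g^2\xi_i\}$, each isomorphic to $\mb3$. The paper also offers a second, shorter proof recasting the statement via $\Ind_e^{\Z/3} V_0$, but your explicit basis argument matches the paper's first proof.
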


\begin{proof}
  Let $\xi_1,\dotsc,\xi_n$ be a basis of $V_0$. Then $g\xi_1,\dotsc,g\xi_n$ is a basis of $V_1$ and $g^2\xi_1,\dotsc,g^2\xi_n$ is a basis of $V_2$. Thus,
  \begin{equation*}
    V=\bigoplus_{i=1}^n\Span\{\xi_i,g\xi_i,g^2\xi_i\}\cong\bigoplus_{i=1}^n\mb3,
  \end{equation*}
  as desired.

  Alternatively, let $e$ denote the trivial group, whose one-dimensional representation we also denote by $\mb1$. Thinking of $V_0$ as an $e$-representation, we see that
  \begin{equation*}
    V=\Ind_e^{\Z/3}V_0\cong\Ind_e^{\Z/3}\bigoplus_{i=1}^{\dim V_0}\mb1\cong\bigoplus_{i=1}^{\dim V_0}\Ind_e^{\Z/3}\mb1\cong\bigoplus_{i=1}^{\dim V_0}\mb3.\qedhere
  \end{equation*}
\end{proof}

\subsection{Main results}

\begin{theorem}\label{thm:2d}
  The following spaces have $S_3$-invariant bases up to sign if and only if the corresponding condition holds.
  \begin{equation*}
    \begin{array}{ccl}
      \cP_r\Lambda^1(T^2)& \quad\text{if and only if}\quad &r\notin 3\mathbb N_0,\\
      \cP_r^-\Lambda^1(T^2)& \quad\text{if and only if}\quad &r\notin 3\mathbb N_0+2,\\
      \oP_r\Lambda^1(T^2)& \quad\text{if and only if}\quad &r\notin 3\mathbb N_0+3,\\
      \oP_r^-\Lambda^1(T^2)& \quad\text{if and only if}\quad &r\notin 3\mathbb N_0+2.
    \end{array}
  \end{equation*}
\end{theorem}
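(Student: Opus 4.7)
The plan is to reduce all four cases to the single question of when $\cP_r\Lambda^1(T^2)$ admits an $S_3$-invariant basis, and then to attack that question by induction modulo $3$. Since tensoring with the one-dimensional representation $\mb1'$ preserves the existence of an invariant basis up to sign, Proposition~\ref{prop:popiso} yields pairings $\cP_r\Lambda^1(T^2) \leftrightarrow \oP_{r+2}^-\Lambda^1(T^2)$ and $\cP_r^-\Lambda^1(T^2) \leftrightarrow \oP_{r+1}\Lambda^1(T^2)$, where each member of a pair has an $S_3$-invariant basis iff the other does. A routine index check shows that under these pairings the four conditions stated in the theorem are consistent, so it suffices to prove the two $\cP$ cases.

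The main step is the recursion. By Corollary~\ref{cor:geodecomp},
\begin{equation*}
  \cP_r\Lambda^1(T^2) \cong \Ind_{S_2}^{S_3}\oP_r\Lambda^1(T^1) \oplus \oP_r\Lambda^1(T^2).
\end{equation*}
The induced summand always has an $S_3$-invariant basis up to sign: $\oP_r\Lambda^1(T^1)$ is a representation of $S_2 \cong \Z/2$ and so automatically has one, and Corollary~\ref{cor:induce} transfers invariance through induction. Moreover, since $S_2 \cap \Z/3 = \{e\}$, Mackey's formula shows this induced summand restricts to a multiple of $\mb3$ over $\Z/3$, which by Corollary~\ref{cor:obstruction} does not affect the $\Z/3$-invariant basis question. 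Applying the duality $\oP_r\Lambda^1(T^2) \cong \cP_{r-1}^-\Lambda^1(T^2) \otimes \mb1'$, then Corollary~\ref{cor:geodecomp} to $\cP_{r-1}^-$, then $\oP_{r-1}^-\Lambda^1(T^2) \cong \cP_{r-3}\Lambda^1(T^2) \otimes \mb1'$, I obtain (suppressing harmless $\mb1'$-twists)
\begin{equation*}
  \cP_r\Lambda^1(T^2) \cong (\text{representation induced from } S_2) \oplus \cP_{r-3}\Lambda^1(T^2),
\end{equation*}
and a parallel computation gives $\cP_r^-\Lambda^1(T^2) \cong (\text{representation induced from } S_2) \oplus \cP_{r-2}\Lambda^1(T^2)$, reducing the $\cP^-$ case to the $\cP$ case.

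For the base cases: $\cP_0\Lambda^1(T^2)$ is the standard $2$-dimensional irreducible of $S_3$ (the quotient of $\langle d\lambda_0, d\lambda_1, d\lambda_2 \rangle$ by $d\lambda_0+d\lambda_1+d\lambda_2$), restricting to $\mb2$ on $\Z/3$ and therefore admitting no $\Z/3$-invariant basis by Proposition~\ref{prop:obstruction}. At $r = 1$, the set $\{\lambda_j\, d\lambda_k : j \ne k\}$ is manifestly $S_3$-invariant, and a dimension count using the three relations $\lambda_i(d\lambda_0+d\lambda_1+d\lambda_2)=0$ confirms it is a basis of the $6$-dimensional space $\cP_1\Lambda^1(T^2)$. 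At $r = 2$, the reduction above recasts the question in terms of $\cP_1^-\Lambda^1(T^2)$, whose Whitney basis $\{\lambda_j\, d\lambda_k - \lambda_k\, d\lambda_j : j < k\}$ is $S_3$-invariant up to sign. Induction in steps of $3$ from $r = 1, 2$ then produces $S_3$-invariant bases whenever $r \not\equiv 0 \pmod 3$, and the same recursion (applied to the $\Z/3$-obstruction side) starting from $r = 0$ rules out invariant bases when $r \equiv 0 \pmod 3$.

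The main obstacle is not a single hard step but rather the bookkeeping needed to keep the recursion consistent: tracking $\mb1'$-twists through the repeated alternation between geometric decomposition and duality, and verifying at each stage that the induced-from-$S_2$ summands can be discarded for both the $S_3$-construction (via Corollary~\ref{cor:induce} together with Proposition~\ref{prop:sumtensor}) and for the $\Z/3$-obstruction (via Corollary~\ref{cor:obstruction} together with the Mackey observation).
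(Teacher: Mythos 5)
Your proposal is correct and follows essentially the same strategy as the paper's proof: reduce to the $\Z/3$ subgroup, discard the induced-from-edges summands of the geometric decomposition (you via a Mackey double-coset count, the paper via its Proposition~\ref{prop:3s}, which says the same thing), and use the duality isomorphism to set up an induction modulo $3$ with $\cP_0\Lambda^1(T^2)\cong\mb2$ as the obstructed base case. The only organizational differences are that you compose the alternating geodecomp/duality steps into a single $r\mapsto r-3$ recursion (the paper runs the same chain forward, $r\mapsto r+3$), and that you reprove the construction direction by exhibiting explicit bases at $r=1,2$ where the paper simply cites Licht; your $r=1$ dimension count is slightly loosely worded (the relations $\lambda_i(d\lambda_0+d\lambda_1+d\lambda_2)=0$ involve the terms $\lambda_i\,d\lambda_i$, which are not in your proposed spanning set, so they do not directly certify linear independence of the six $\lambda_j\,d\lambda_k$ with $j\neq k$), but the claim is true and easily checked, so this is a presentational quibble rather than a gap.
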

\begin{proof}
  The first direction of this theorem, that is, constructing the invariant bases, is due to Licht \cite[Theorem 7]{li19}. It remains to prove the converse, that Licht's construction is complete: There are no invariant bases in the remaining cases.

  As before, let $\Z/3$ denote the subgroup of $S_3$ consisting of cyclic permutations of $\{0,1,2\}$; these give rotations of $T^2$. We observe that a basis that is $S_3$-invariant up to sign is also $\Z/3$-invariant up to sign. Our strategy, then, is to decompose the finite element spaces into irreducible $\Z/3$-representations and then to apply the obstruction to having invariant bases given by Proposition~\ref{prop:obstruction}.

  If $r\ge1$, then applying the geometric decomposition Corollary~\ref{cor:geodecomp}, we have the isomorphism of $\Z/3$-representations
  \begin{equation*}
    \cP_r\Lambda^1(T^2)\cong\left(\oP_r\Lambda^1(F_{12})\oplus\oP_r\Lambda^1(F_{20})\oplus\oP_r\Lambda^1(F_{01})\right)\oplus\oP_r\Lambda^1(T^2),
  \end{equation*}
  where $F_{ij}$ denotes the edge joining vertices $i$ and $j$. We observe that the factor $\left(\oP_r\Lambda^1(F_{12})\oplus\oP_r\Lambda^1(F_{20})\oplus\oP_r\Lambda^1(F_{01})\right)$ satisfies the hypotheses of Proposition~\ref{prop:3s}, so
  \begin{equation*}
    \cP_r\Lambda^1(T^2)\cong\left(\dim\oP_r\Lambda^1(T^1)\right)\mb3\oplus\oP_r\Lambda^1(T^2).
  \end{equation*}
  Thus, by Corollary ~\ref{cor:obstruction}, if $r\ge1$, then $\cP_r\Lambda^1(T^2)$ has an $\Z/3$-invariant representation up to sign if and only if $\oP_r\Lambda^1(T^2)$ does. Following the same reasoning, $\cP_r^-\Lambda^1(T^2)$ has a $\Z/3$-invariant representation up to sign if and only if $\oP_r^-\Lambda^1(T^2)$ does.

  Moreover, noting that the sign representation is trivial when restricted to $\Z/3\subset S_3$, Proposition~\ref{prop:popiso} tells us that $\cP_r\Lambda^1(T^2)$ has a $\Z/3$-invariant basis up to sign if and only if $\oP_{r+2}^-\Lambda^1(T^2)$ does, and $\cP_r^-\Lambda^1(T^2)$ has a $\Z/3$-invariant basis up to sign if and only if $\oP_{r+1}\Lambda^1(T^2)$ does.

  We then proceed by induction, starting from the base case that $\cP_0\Lambda^1(T^2)\cong\mb2$ does not have a $\Z/3$-invariant basis up to sign. For the inductive step, let $r\ge0$ and assume that $\cP_r\Lambda^1(T^2)$ does not have a $\Z/3$-invariant basis up to sign. Following the above observations, we conclude that $\oP_{r+2}^-\Lambda^1(T^2)$ does not have such a basis, so neither does $\cP_{r+2}^-\Lambda^1(T^2)$, so neither does $\oP_{r+3}\Lambda^1(T^2)$, and so neither does $\cP_{r+3}\Lambda^1(T^2)$. The claim follows by induction.
\end{proof}

We now prove a similar claim in dimension three, but here our list differs substantially from Licht's, so we must prove both directions.
\begin{theorem}\label{thm:3d}
  The following spaces have $S_4$-invariant bases up to sign if and only if the corresponding condition holds.
  \begin{equation*}
    \begin{array}{ccl}
      \cP_r\Lambda^1(T^3)& \text{always},\\
      \cP_r^-\Lambda^1(T^3)& \quad\text{if and only if}\quad &r\notin 3\mathbb N_0+2,\\
      \cP_r\Lambda^2(T^3)& \text{always},\\
      \cP_r^-\Lambda^2(T^3)& \text{always},\\
      \oP_r\Lambda^1(T^3)& \text{always},\\
      \oP_r^-\Lambda^1(T^3)& \text{always},\\
      \oP_r\Lambda^2(T^3)& \text{if and only if} &r\notin 3\mathbb N_0+3,\\
      \oP_r^-\Lambda^2(T^3)& \text{always}.\\
    \end{array}
  \end{equation*}
\end{theorem}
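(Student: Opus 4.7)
The plan is to work through the eight rows in a suitable order, using three tools: the duality isomorphism of Proposition~\ref{prop:popiso}, the geometric decomposition of Corollary~\ref{cor:geodecomp}, and the tensor factorization $\cP_r\Lambda^k(T^3)\cong\cP_r\Lambda^0(T^3)\otimes\cP_0\Lambda^k(T^3)$ from the proof of Corollary~\ref{cor:3dbasis}. The two $\cP_r\Lambda^k(T^3)$ rows are immediate from Corollary~\ref{cor:3dbasis}. For the two $\oP_r^-\Lambda^k(T^3)$ rows, rearranging Proposition~\ref{prop:popiso} gives $\oP_r^-\Lambda^{3-k}(T^3)\cong\cP_{r-k-1}\Lambda^k(T^3)\otimes\mb1'$, which is zero when the subscript is negative and otherwise inherits an invariant basis from Corollary~\ref{cor:3dbasis}; tensoring with $\mb1'$ preserves invariance up to sign.

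For $\cP_r^-\Lambda^2(T^3)$ with $r\ge1$, the geometric decomposition simplifies to
\[
\cP_r^-\Lambda^2(T^3)\cong\oP_r^-\Lambda^2(T^3)\oplus\bigoplus_{F\text{ triangle}}\oP_r^-\Lambda^2(F),
\]
since $2$-forms on lower-dimensional faces vanish. The first summand was just handled, and the second is a sum of copies of $\oP_r^-\Lambda^2(T^2)\cong\cP_{r-1}\Lambda^2(T^2)$ permuted by $S_4$; since $\cP_{r-1}\Lambda^2(T^2)$ has an $S_3$-invariant basis by Proposition~\ref{prop:k0n}, Corollary~\ref{cor:induce} provides the induced $S_4$-invariant basis. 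The case $r=0$ is trivial since $\cP_0^-\Lambda^2=0$. Dualizing yields the same conclusion for $\oP_r\Lambda^1(T^3)$.

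For $\cP_r^-\Lambda^1(T^3)$ with $r\ge1$, the geometric decomposition has three nonzero pieces:
\[
\cP_r^-\Lambda^1(T^3)\cong\oP_r^-\Lambda^1(T^3)\oplus\bigoplus_{F\text{ triangle}}\oP_r^-\Lambda^1(F)\oplus\bigoplus_{e\text{ edge}}\oP_r^-\Lambda^1(e).
\]
The first always has an invariant basis; the third always does because $\oP_r^-\Lambda^1(T^1)\cong\cP_{r-1}\Lambda^1(T^1)\cong\cP_{r-1}\Lambda^0(T^1)\otimes\mb1'$ inherits one via Propositions~\ref{prop:k0n} and~\ref{prop:sumtensor}; and the middle has an invariant basis precisely when $r\notin3\mathbb N_0+2$ by Theorem~\ref{thm:2d}. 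This handles the construction direction for $\cP_r^-\Lambda^1(T^3)$, and dualizing handles $\oP_r\Lambda^2(T^3)$ under the shifted condition $r\notin3\mathbb N_0+3$.

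The main obstacle is the converse direction for these last two rows, which parallels the dimension-two argument. I restrict the decomposition of $\cP_r^-\Lambda^1(T^3)$ to $\Z/3=\langle(1\,2\,3)\rangle\subset S_4$, which fixes vertex $0$, fixes only the triangle $F_{123}$, cycles the other three triangles, and splits the six edges into two $3$-cycles. By Proposition~\ref{prop:3s}, every cycled orbit contributes a multiple of $\mb3\cong\mb1\oplus\mb2$, which is neutral for the obstruction of Proposition~\ref{prop:obstruction}. Furthermore, combining duality with the tensor factorization and the identity $\cP_0\Lambda^2(T^3)\cong\mb3\otimes\mb1'$ identifies $\oP_r^-\Lambda^1(T^3)$ with $\cP_{r-3}\Lambda^0(T^3)\otimes\mb3$, which by Corollary~\ref{cor:tensor3} is also a sum of copies of $\mb3$ as a $\Z/3$-representation, hence neutral. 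The only summand that can make the $\mb2$-multiplicity exceed the $\mb1$-multiplicity is the fixed triangle, contributing $\oP_r^-\Lambda^1(T^2)$ with its natural $\Z/3\cong A_3$ action; by the inductive analysis proving Theorem~\ref{thm:2d}, for $r\in3\mathbb N_0+2$ this space has $n>m$, and hence so does $\cP_r^-\Lambda^1(T^3)$. By Proposition~\ref{prop:obstruction}, this rules out a $\Z/3$-invariant basis up to sign, hence also an $S_4$-invariant one. Dualizing gives the converse for $\oP_r\Lambda^2(T^3)$, completing all eight rows.
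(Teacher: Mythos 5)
Your proof is correct and follows essentially the same approach as the paper: tensor factorization via Corollary~\ref{cor:3dbasis} and duality for the base cases, geometric decomposition for the inductive step, and restriction to a $\Z/3$ subgroup (yours is conjugate to the paper's, which fixes vertex $3$) combined with Proposition~\ref{prop:3s} and Corollary~\ref{cor:tensor3} to reduce the obstruction to the two-dimensional case. The only difference is cosmetic --- you unwind Corollary~\ref{cor:induct} into explicit applications of Corollary~\ref{cor:induce} rather than citing it directly.
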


\begin{proof}[Proof, part 1: Constructing invariant bases]
  This proof essentially follows Licht's inductive construction \cite{li19} as expressed here in Corollary~\ref{cor:induct}, except that we additionally include as base cases that $\cP_r\Lambda^k(T^3)$ always has an $S_4$-invariant basis up to sign by Corollary~\ref{cor:3dbasis}, from which it also follows that $\oP_r^-\Lambda^k(T^3)$ always has an $S_4$-invariant basis up to sign by Proposition~\ref{prop:popiso}.

  We begin with $\cP_r^-\Lambda^2(T^3)$. By Corollary~\ref{cor:induct}, we can construct an invariant basis for $\cP_r^-\Lambda^2(T^3)$ if we can construct an invariant basis for $\oP_r^-\Lambda^2(T^3)$ and $\oP_r^-\Lambda^2(T^2)$. As just discussed, we can always construct an invariant basis for $\oP_r^-\Lambda^2(T^3)$, and we can always construct an invariant basis for $\oP_r^-\Lambda^2(T^2)$ because it is a space of top-level forms. Thus, $\cP_r^-\Lambda^2(T^3)$ always has an invariant basis, from which we can conclude that so does $\oP_r\Lambda^1(T^3)$ by Proposition~\ref{prop:popiso}.

  Next, consider $\cP_r^-\Lambda^1(T^3)$. By Corollary~\ref{cor:induct}, we can construct an invariant basis for $\cP_r^-\Lambda^1(T^3)$ if we can construct an invariant basis for $\oP_r^-\Lambda^1(T^3)$, $\oP_r^-\Lambda^1(T^2)$ and $\oP_r^-\Lambda^1(T^1)$. As we discussed at the beginning of the proof, doing so is always possible for $\oP_r^-\Lambda^1(T^3)$. By Theorem~\ref{thm:2d}, we can construct such a basis for $\oP_r^-\Lambda^1(T^2)$ if $r\notin3\mathbb N_0+2$. Finally, we can always construct an invariant basis for $\oP_r^-\Lambda^1(T^1)$. We conclude that an invariant basis for $\cP_r^-\Lambda^1(T^3)$ exists if $r\notin3\mathbb N_0+2$. In this case, Proposition~\ref{prop:popiso} gives an isomorphism up to sign between $\cP_r^-\Lambda^1(T^3)$ and $\oP_{r+1}\Lambda^2(T^3)$, so we conclude that an invariant basis for $\oP_r\Lambda^2(T^3)$ exists if $r\notin3\mathbb N_0+3$.
\end{proof}

\begin{proof}[Proof, part 2: Obstructing invariant bases]
  To prove the converse, we follow a similar strategy to the proof of Theorem~\ref{thm:2d}. Assume that $\cP_r^-\Lambda^1(T^3)$ has an $S_4$-invariant basis up to sign. Then it also has a $\Z/3$-invariant basis up to sign, where, following earlier notation, we take $\Z/3\subset S_3\subset S_4$ to be the subgroup consisting of permutations of $\{0,1,2,3\}$ that fix $3$ and cyclically permute $0$, $1$, and $2$. By Corollary~\ref{cor:geodecomp}, we have the following isomorphism of $\Z/3$-representations.
  \begin{equation*}
    \begin{split}
      \cP_r^-\Lambda^1(T^3)&\cong\left(\oP_r^-\Lambda^1(F_{12})\oplus\oP_r^-\Lambda^1(F_{20})\oplus\oP_r^-\Lambda^1(F_{01})\right)\\
      &\quad\oplus\left(\oP_r^-\Lambda^1(F_{03})\oplus\oP_r^-\Lambda^1(F_{13})\oplus\oP_r^-\Lambda^1(F_{23})\right)\\
      &\quad\oplus\left(\oP_r^-\Lambda^1(F_{123})\oplus\oP_r^-\Lambda^1(F_{203})\oplus\oP_r^-\Lambda^1(F_{013})\right)\\
      &\quad\oplus\oP_r^-\Lambda^1(T^2)\\
      &\quad\oplus\oP_r^-\Lambda^1(T^3).
      \end{split}
    \end{equation*}
    Here, as before, the subscript notation $F_{\cdots}$ denotes the face spanned by the given vertices, and we take $T^2$ to mean $F_{012}$. Each of the first three lines satisfies the hypotheses of Proposition~\ref{prop:3s}, so the first three lines are sums of copies of the representation $\mb3$. Next, using Proposition~\ref{prop:popiso}, we have, as $\Z/3$-representations, the isomorphisms
    \begin{equation*}
      \oP_r^-\Lambda^1(T^3)\cong\cP_{r-3}\Lambda^2(T^3)\cong\cP_{r-3}\Lambda^0(T^3)\otimes\cP_0\Lambda^2(T^3)\cong\cP_{r-3}\Lambda^0(T^3)\otimes\mb3,
    \end{equation*}
    which is also a sum of copies of the representation $\mb3$ by Corollary~\ref{cor:tensor3}. We conclude by Corollary~\ref{cor:obstruction} that $\cP_r^-\Lambda^1(T^3)$ has a $\Z/3$-invariant basis up to sign if and only if $\oP_r^-\Lambda^1(T^2)$ does, which we showed in the proof of Theorem~\ref{thm:2d} happens precisely when $r\notin3\mathbb N_0+2$.

    As above, Proposition~\ref{prop:popiso} then implies that $\oP_r\Lambda^2(T^3)$ has a $\Z/3$-invariant basis up to sign if and only if $r\notin3\mathbb N_0+3$; in particular, it cannot have an $S_4$-invariant basis up to sign if $r\in3\mathbb N_0+3$.
  \end{proof}

  Licht \cite{li19} also asks whether these bases can be geometrically decomposed. All of the bases construct by Licht can be, and, additionally, Licht shows that if there are bases not found by his construction, then at least one of them \emph{cannot} be geometrically decomposed. To have this discussion, we first define a notion of a geometrically decomposable basis.
  
  \begin{definition}
    Let $V$ be a space of $k$-forms, and let $\mathcal D$ be the geometric decomposition map as in Definition~\ref{def:geodecomp}. We say that a basis for $V$ can be \emph{geometrically decomposed} if for every $\xi$ in the basis, $\mathcal D(\xi)$ lies in one of the direct summands $\oV(F)$ for a face $F$ of $T^n$.
  \end{definition}

  If the geometric decomposition map is surjective, then such a basis for $V$ yields bases for each $\oV(F)$, and, in particular, for each $\oV(T^d)$. Moreover, if this basis is $S_{n+1}$-invariant up to sign, then the corresponding bases of $\oV(T^d)$ are $S_{d+1}$-invariant up to sign. Thus, asking whether an $S_{n+1}$-invariant basis can be geometrically decomposed amounts to asking whether the converse to Corollary~\ref{cor:induct} holds. In light of Theorems~\ref{thm:2d} and~\ref{thm:3d}, we can quickly answer this question.

  \begin{corollary}
    Let $r\ge1$. The following spaces have $S_4$-invariant bases up to sign that can be geometrically decomposed if and only if the corresponding condition holds.
    \begin{equation*}
      \begin{array}{ccl}
        \cP_r\Lambda^1(T^3)& \text{if and only if}&r\notin3\mathbb N_0+3,\\
        \cP_r^-\Lambda^1(T^3)& \quad\text{if and only if}\quad &r\notin 3\mathbb N_0+2,\\
        \cP_r\Lambda^2(T^3)& \text{if and only if}&r\notin3\mathbb N_0+3,\\
        \cP_r^-\Lambda^2(T^3)& \text{always}.\\
      \end{array}
    \end{equation*}
  \end{corollary}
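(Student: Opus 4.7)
The plan is to apply directly the criterion developed in the paragraph preceding the corollary: for $r \ge 1$, the space $V$ admits a geometrically decomposable $S_4$-invariant basis up to sign if and only if $\oV(T^d)$ admits an $S_{d+1}$-invariant basis up to sign for every face dimension $d$. The forward direction is explained in that paragraph: since $\mathcal D$ is an isomorphism by Proposition~\ref{prop:Diso}, a geometrically decomposed basis of $V$ restricts to a basis of each $\oV(F)$, and transitivity of $S_4$ on $d$-dimensional faces transports these to an $S_{d+1}$-invariant basis of $\oV(T^d)$. The reverse direction is exactly Corollary~\ref{cor:induct} (together with Proposition~\ref{prop:sumtensor}): from invariant bases on each $\oV(T^d)$ one induces invariant bases on each $\oV(F)$, and the union is a geometrically decomposable $S_4$-invariant basis.

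With this criterion in hand, the proof becomes an enumeration. Only face dimensions $d \ge k$ contribute, since $\oP_r\Lambda^k(T^d) = 0$ for $d < k$, so there are at most three relevant values of $d$ for $k = 1$ and at most two for $k = 2$. The top-degree summands $\oP_r\Lambda^d(T^d)$ and $\oP_r^-\Lambda^d(T^d)$ always admit invariant bases up to sign: multiply the $S_{d+1}$-invariant monomial basis for scalar functions from Proposition~\ref{prop:k0n} by the volume form. The remaining face contributions are $\oP_r\Lambda^k(T^d)$ and $\oP_r^-\Lambda^k(T^d)$ with $k < d \le n$, and their invariant-basis status is read off of Theorems~\ref{thm:2d} and~\ref{thm:3d}.

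I would then carry out the four bookkeeping computations. For $\cP_r\Lambda^1(T^3)$, the dimension-$1$ piece is top-degree, the dimension-$3$ piece $\oP_r\Lambda^1(T^3)$ is always invariant by Theorem~\ref{thm:3d}, and the only binding face is $\oP_r\Lambda^1(T^2)$, giving $r \notin 3\mathbb N_0 + 3$ by Theorem~\ref{thm:2d}. For $\cP_r^-\Lambda^1(T^3)$, analogously, the only binding face is $\oP_r^-\Lambda^1(T^2)$, giving $r \notin 3\mathbb N_0 + 2$. For $\cP_r\Lambda^2(T^3)$, the only non-top-degree contribution is $\oP_r\Lambda^2(T^3)$, giving $r \notin 3\mathbb N_0 + 3$ by Theorem~\ref{thm:3d}. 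For $\cP_r^-\Lambda^2(T^3)$, both the top-degree piece $\oP_r^-\Lambda^2(T^2)$ and the interior piece $\oP_r^-\Lambda^2(T^3)$ always admit invariant bases, so no condition arises and the geometrically decomposable basis always exists.

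I do not anticipate any real obstacle; once Theorems~\ref{thm:2d} and~\ref{thm:3d} are in hand and the equivariance of the geometric decomposition is established, this is pure accounting. The only subtlety worth flagging is that the hypothesis $r \ge 1$ is essential because it is exactly what makes $\mathcal D$ surjective via Proposition~\ref{prop:Diso}; without surjectivity, the forward direction of the criterion (that a geometrically decomposed basis induces bases on each $\oV(T^d)$) can fail, so the equivalence reducing the corollary to a lookup of the two theorems would no longer apply.
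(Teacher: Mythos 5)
Your proposal is correct and follows essentially the same approach as the paper: reduce the question to whether each $\oV(T^d)$ for $d\ge k$ has an $S_{d+1}$-invariant basis (forward direction via surjectivity of $\mathcal D$ from Proposition~\ref{prop:Diso}, backward via Corollary~\ref{cor:induct}), then read the answers off Theorems~\ref{thm:2d} and~\ref{thm:3d} together with Proposition~\ref{prop:k0n} for the top-degree faces. Your enumeration of the binding faces and the resulting conditions matches the paper's case-by-case check exactly.
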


  \begin{proof}
    For $\cP_r\Lambda^1(T^3)$, we must check whether $\oP_r\Lambda^1(T^3)$, $\oP_r\Lambda^1(T^2)$, and $\oP_r\Lambda^1(T^1)$ all have invariant bases up to sign, which happens if and only if $r\notin3\mathbb N_0+3$.

    For $\cP_r^-\Lambda^1(T^3)$, we must check whether $\oP_r^-\Lambda^1(T^3)$, $\oP_r^-\Lambda^1(T^2)$, and $\oP_r^-\Lambda^1(T^1)$ all have invariant bases up to sign, which happens if and only if $r\notin3\mathbb N_0+2$.

    For $\cP_r\Lambda^2(T^3)$, we must check whether $\oP_r\Lambda^2(T^3)$ and $\oP_r\Lambda^2(T^2)$ have invariant bases up to sign, which happens if and only if $r\notin3\mathbb N_0+3$.

    For $\cP_r^-\Lambda^2(T^3)$, we must check whether $\oP_r^-\Lambda^2(T^3)$ and $\oP_r^-\Lambda^2(T^2)$ have invariant bases up to sign, which is always the case.
  \end{proof}

  \begin{remark}
    As in Licht's work \cite{li19}, the inductive argument gives an explicit way of constructing invariant bases. The extension maps defined in \cite{afw09} and used by Licht \cite{li19} give an explicit way of combining invariant bases for the summands of the geometric decomposition to give an invariant basis for the full space, and the duality isomorphism is an explicit map that allows one to take an invariant basis for a finite element space and obtain an invariant basis for the corresponding space with vanishing trace, and vice versa. The new ingredient in this paper for constructing invariant bases is the invariant basis for $\cP_r\Lambda^k(T^3)$, which is once again given explicitly by tensoring the monomial basis for $\cP_r\Lambda^0(T^3)$ with the explicit bases for $\cP_0\Lambda^1(T^3)$ and $\cP_0\Lambda^2(T^3)$ described geometrically in Subsection~\ref{subsec:invariant} and written algebraically in \cite{li19}.
  \end{remark}

  \begin{remark}
    While the special situation in dimension three allowed us to take some shortcuts, the ideas in this paper can be used in any dimension to determine exactly which finite element spaces have invariant bases. The geometric decomposition in Corollary~\ref{cor:geodecomp} along with the duality isomorphism in Proposition~\ref{prop:popiso} allow us to recursively decompose the $\cP_r\Lambda^k(T^n)$ and $\cP_r^-\Lambda^k(T^n)$ spaces into irreducible $S_{n+1}$-representations. In turn, the ideas of Proposition~\ref{prop:obstruction} can be used for any group, including $S_{n+1}$, to determine which representations have invariant bases. Specifically, one would compute $\Ind_{H}^{S_{n+1}}L$ for all subgroups $H$ of $S_{n+1}$ and all one-dimensional representations $L$ of $H$. Writing each of these as a sum of irreducible representations, we obtain a generating set for the representations with invariant bases, allowing us to use a given representation's decomposition into irreducibles to determine whether or not it has an invariant basis.
  \end{remark}

\appendix

\section{Consistent extension operators}\label{sec:extension}
In this appendix, we show that the extension operators discussed in \cite{afw09} yield valid choices of complements $W_d$ as discussed in Section~\ref{sec:geodecomp}, and so, in this context, the notion of geometric decomposition in Section~\ref{sec:geodecomp} matches the one in \cite{afw09}. However, we also show that our notion of geometric decomposition is more general (and hence weaker). For our results, our simpler and more general setup suffices, but we expect that other work in finite element exterior calculus may require the more stringent requirements on the extension maps in \cite{afw09}.

In our notation, we take the following definition from \cite[Section 4]{afw09}.

\begin{definition}
  For faces $K\subseteq F\subseteq T^n$, an operator $E_{K, F}\colon V(K)\to V(F)$ is an \emph{extension operator} if $\tr_{F, K}E_{K, F}\,\alpha=\alpha$ for all $\alpha\in V(K)$.

  A family of such extension operators $\{E_{F,F'}\mid F\subseteq F'\subseteq T^n\}$ is \emph{consistent} if the diagram
  \begin{equation*}
    \begin{tikzcd}
      V(F)\arrow[r, "E_{F, H}"]\arrow[d, "\tr_{F, K}"]&V(H)\arrow[d, "\tr_{H, F'}"]\\
      V(K)\arrow[r, "E_{K, F'}"]&V(F')
    \end{tikzcd}
  \end{equation*}
  commutes for any subfaces $F$ and $F'$ of a face $H$, where $K=F\cap F'$.
\end{definition}

Given a consistent family of extension operators, we can construct corresponding complements $W_d$ as in Definition~\ref{def:W}.

\begin{proposition}\label{prop:extensiontoW}
  Given a consistent family of extension operators, let
  \begin{equation*}
    W_d:=\bigoplus_{\substack{F\subseteq T^n\\\dim F=d}}E_{F, T^n}\oV(F).
  \end{equation*}
  Then $V_d=W_d\oplus V_{d+1}$, where the $V_d$ are defined in Definition~\ref{def:Vd}.
\end{proposition}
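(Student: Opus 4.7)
The plan is to verify the two claims comprising the direct sum decomposition $V_d = W_d \oplus V_{d+1}$: first that $V_d \subseteq W_d + V_{d+1}$, and second that $W_d \cap V_{d+1} = 0$ (which will also show that the sum defining $W_d$ is indeed direct). The essential tool is the consistency diagram applied with the top face taken to be $T^n$, which will let me reduce cross-terms between distinct $d$-faces to traces on their intersection.

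For the spanning claim, given $\alpha \in V_d$, I first observe that for any $d$-dimensional face $F$ of $T^n$, the trace $\tr_{T^n, F}\alpha$ lies in $\oV(F)$: this is because $\partial F$ is a union of $(d-1)$-dimensional faces of $T^n$, on all of which $\alpha$ has vanishing trace by the definition of $V_d$. Set
\[
  \beta := \sum_{\substack{F\subseteq T^n\\ \dim F=d}} E_{F,T^n}(\tr_{T^n,F}\alpha) \in W_d,
\]
and let $\gamma := \alpha - \beta$. I claim $\gamma \in V_{d+1}$, which amounts to showing $\tr_{T^n,F'}\gamma = 0$ for every $d$-face $F'$. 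For the $F=F'$ term of $\beta$, the extension property gives $\tr_{T^n,F'}E_{F',T^n}(\tr_{T^n,F'}\alpha) = \tr_{T^n,F'}\alpha$, canceling $\tr_{T^n,F'}\alpha$. For $F \neq F'$ both of dimension $d$, their intersection $K = F \cap F'$ has $\dim K < d$, so by consistency with $H = T^n$,
\[
  \tr_{T^n,F'}E_{F,T^n}(\tr_{T^n,F}\alpha) = E_{K,F'}\,\tr_{F,K}(\tr_{T^n,F}\alpha) = E_{K,F'}\,\tr_{T^n,K}\alpha = 0,
\]
since $\tr_{T^n,K}\alpha$ vanishes as $\alpha \in V_d$ and $\dim K \le d-1$.

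For the intersection claim, suppose $\beta = \sum_F E_{F,T^n}\alpha_F$ with $\alpha_F \in \oV(F)$ lies in $V_{d+1}$, so $\tr_{T^n,F'}\beta = 0$ for every $d$-face $F'$. The $F=F'$ term yields $\alpha_{F'}$. For $F \ne F'$, the intersection $K = F \cap F'$ is a proper face of $F$, so $K \subseteq \partial F$; by consistency,
\[
  \tr_{T^n,F'}E_{F,T^n}\alpha_F = E_{K,F'}\,\tr_{F,K}\alpha_F = 0,
\]
because $\alpha_F \in \oV(F)$ has vanishing trace on $\partial F$. Therefore $\alpha_{F'} = 0$ for every $d$-face $F'$, giving $\beta = 0$; as a byproduct, the same argument shows that the sum defining $W_d$ is in fact a direct sum and that each $E_{F,T^n}$ is injective on $\oV(F)$, so writing $W_d$ as a direct sum is legitimate.

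The main obstacle is conceptual rather than computational: one must correctly pair each cross-term with the consistency diagram, noting that for two distinct top-dimensional faces (with respect to $W_d$), their intersection lies in $\partial F$, which is precisely what forces the off-diagonal contributions to vanish in both halves of the argument. Once this is in hand, both inclusions follow immediately.
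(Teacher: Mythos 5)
Your proof is correct and structurally mirrors the paper's own argument: set $\beta$ equal to the sum of extensions of traces, show $\gamma = \alpha - \beta \in V_{d+1}$, and show $W_d \cap V_{d+1} = 0$. The difference lies in how the key cross-term vanishing is obtained. The paper outsources it, citing \cite[Lemma~4.1]{afw09} for the fact that $\tr_{T^n, F'} E_{F, T^n}$ vanishes on $\oV(F)$ when $F' \neq F$ are distinct $d$-faces, and \cite[Equation~4.9]{afw09} for the directness of the sum defining $W_d$. You instead derive both facts directly from the consistency axiom applied with $H = T^n$, reducing $\tr_{T^n, F'} E_{F, T^n}$ to $E_{K, F'}\,\tr_{F, K}$ with $K = F \cap F' \subseteq \partial F$; this makes the argument self-contained and makes it transparent that consistency is precisely the hypothesis doing the work. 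Two small points worth making explicit in a final write-up: (i) to conclude $V_d = W_d \oplus V_{d+1}$ one also needs $W_d \subseteq V_d$, which neither you nor the paper states explicitly (the paper's citation of Lemma~4.1 covers it implicitly), but which your own consistency computation delivers immediately upon taking $F'$ to be an arbitrary $(d-1)$-face $G$, since then $F \cap G$ is a proper face of $F$ and $\tr_{F, F\cap G}\alpha_F = 0$; and (ii) when you assert $\tr_{T^n,K}\alpha = 0$ because $\dim K \le d-1$, you are implicitly composing traces through some $(d-1)$-face containing $K$, which is correct but deserves a word since $V_d$ is defined only via vanishing on $(d-1)$-faces.
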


\begin{proof}
  First, the direct sum notation in the definition of $W_d$ implicitly assumes that $E_{F, T^n}\oV(F)\cap E_{F', T^n}\oV(F')=0$; this is a consequence of \cite[Equation~4.9]{afw09}. Moreover, by \cite[Lemma~4.1]{afw09}, if $F'$ and $F$ are $d$-dimensional faces and $F'\neq F$, then the composition $\tr_{T^n, F'}E_{F, T^n}$ is the zero map on $\oV(F)$. On the other hand, if $F'=F$, then the composition $\tr_{T^n, F'}E_{F, T^n}$ is the identity map. Consequently, for any $\beta\in W_d$ written as
  \begin{equation*}
    \beta=\bigoplus_{\substack{F\subseteq T^n\\\dim F=d}}E_{F, T^n}\beta_F,
  \end{equation*}
  where $\beta_F\in\oV(F)$, we have
  \begin{equation}\label{eq:Wproject}
    \tr_{T^n, F'}\beta=\beta_{F'}
  \end{equation}
  for any $d$-dimensional face $F'$. From equation~\eqref{eq:Wproject}, it is easy to see that $W_d\cap V_{d+1}=0$. Indeed, with $\beta$ as above, if $\beta\in V_{d+1}$ then by definition $\tr_{T^n, F'}\beta=0$ for any $d$-dimensional face $F'$, so $\beta_{F'}=0$ for all $d$-dimensional $F'$, and so $\beta=0$.
  
  Now, let $\alpha\in V_d$; we aim to write $\alpha=\beta+\gamma$ where $\beta\in W_d$ and $\gamma\in V_{d+1}$. By definition of $V_d$, we know that $\alpha$ vanishes on any $(d-1)$-dimensional face of $T^n$, and hence $\tr_{T^n, F}\alpha\in\oV(F)$ for any $d$-dimensional face $F$ of $T^n$. Thus, we can let
  \begin{equation*}
    \beta=\bigoplus_{\substack{F\subseteq T^n\\\dim F=d}}E_{F, T^n}\tr_{T^n, F}\alpha\in W_d.
  \end{equation*}
  By equation~\eqref{eq:Wproject}, if $F'$ is a $d$-dimensional face, then
  \begin{equation*}
    \tr_{T^n, F'}\beta=\tr_{T^n, F'}\alpha.
  \end{equation*}
  Thus, if we let $\gamma=\alpha-\beta$, then $\tr_{T^n, F'}\gamma=0$ for any $d$-dimensional face $F'$, so $\gamma\in V_{d+1}$ by definition.
\end{proof}

\begin{remark}
  The decomposition $V=W_0\oplus\dotsb\oplus W_n$ in Proposition~\ref{prop:Wdecomp} then exactly yields the decomposition $V=\bigoplus_{F\subseteq T^n}E_{F, T^n}\oV(F)$ given in \cite[Equation~4.9]{afw09}. Moreover, by equation~\eqref{eq:Wproject}, the geometric decomposition map $\mathcal D$ in Definition~\ref{def:geodecomp} is just the natural map $\bigoplus_{F\subseteq T^n}E_{F, T^n}\oV(F)\to\bigoplus_{F\subseteq T^n}\oV(F)$.
\end{remark}

\begin{remark}\label{rem:converse}
  Only some of the converse holds. Given complements $W_d$, we obtain an injective geometric decomposition map $\mathcal D$ as in Definition~\ref{def:geodecomp}. If this map is furthermore an isomorphism, then we have extension maps $E_{F, T^n}\colon\oV(F)\to W_d\subseteq V$ as discussed in Remark~\ref{rem:extension}. As discussed in Corollary~\ref{cor:extensionproperties}, these extension maps enjoy several properties. In particular, if $\alpha\in E_{F, T^n}\oV(F)$, then its restriction $\tr_{T^n, F'}\alpha$ to any other $d$-dimensional face $F'\neq F$ is zero. However, \cite[Lemma 4.1]{afw09} makes a stronger claim, that $\tr_{T^n, F'}\alpha$ is zero for any face $F'$ (of any dimension) not containing $F$; this may fail if $F'$ has higher dimension than $F$.

  Indeed, in the case where $V=\cP_2\Lambda^0(T^2)$ discussed in Example~\ref{eg:P2}, the choice of of $W_0$ is quite flexible. The natural choice is $W_0=\Span\{\lambda_0^2,\lambda_1^2,\lambda_2^2\}$, but, even if we require invariance with respect to the $S_3$ symmetry, nothing stops us from choosing
  \begin{equation*}
    W_0=\Span\{\lambda_0^2+17\lambda_1\lambda_2,\lambda_1^2+17\lambda_2\lambda_0,\lambda_2^2+17\lambda_0\lambda_1\}.
  \end{equation*}
  With this latter choice, if $F$ is the vertex $0$ and $F'$ is the edge $12$, one can check that $E_{F, T^2}(1)=\lambda_0^2+17\lambda_1\lambda_2$ and $\tr_{T^2, F'}(\lambda_0^2+17\lambda_1\lambda_2)=17\lambda_1\lambda_2\neq0$.
\end{remark}
  
\subsection*{$S_{n+1}$-invariance}
In the context where $V$ is $S_{n+1}$-invariant, we want $W_d$ to be $S_{n+1}$-invariant as well. In light of the construction in Proposition~\ref{prop:extensiontoW}, it suffices to require that the extension operators respect this action, in the sense that the diagram
\begin{equation*}
  \begin{tikzcd}
    V\arrow[r, "S_\pi^*"]&V\\
    V(F')\arrow[r, "S_\pi^*"]\arrow[u,"E_{F', T^n}", swap]&V(F)\arrow[u,"E_{F, T^n}", swap]
  \end{tikzcd}
\end{equation*}
commutes, where $F'=S_\pi F$, analogously to diagram~\eqref{eq:Vdiagram} in Subsection~\ref{subsec:representations}. There is no reason to expect this property to be true in general, but it holds for extension operators defined by ``natural'' properties, including the extension operators constructed in \cite{afw09} for the $\cP_r\Lambda^k(T^n)$ and $\cP_r^-\Lambda^k(T^n)$ spaces. We roughly sketch the arguments below; see also \cite[Section~7]{li19}.

There are actually two sets of extension operators constructed in \cite{afw09}. The first, denoted with the symbols $F^{k, r}_{F, T^n}$ and $F^{k, r,-}_{F, T^n}$ in \cite[Section 5]{afw09}, is defined in terms of degrees of freedom: the degrees of freedom of $F^{k, r, (-)}_{F, T^n}\alpha$ must match those of $\alpha$ on all subfaces $K\subseteq F$ and be zero on all other faces of $T^n$; this uniquely determines the extension. One then checks that if $\alpha'\in\cP_r^{(-)}\Lambda^k(F')$, then the condition that the degrees of freedom of $F^{k, r, (-)}_{F', T^n}\alpha'$ match those of $\alpha'$ implies that the degrees of freedom of $S_\pi^*F^{k, r, (-)}_{F', T^n}\alpha'$ match those of $S_\pi^*\alpha'$, and likewise for the degrees of freedom required to be zero. (Intuitively, rotating $T^n$ ``rotates'' the degrees of freedom.) Since this condition uniquely determines the extension, we conclude that $S_\pi^*F^{k, r, (-)}_{F', T^n}\alpha'$ is indeed the extension of $S_\pi^*\alpha'$, as desired.

The reasoning is similar for the extension operators $E^{k, r}_{F, T^n}$ and $E^{k, r, -}_{F, T^n}$ defined in \cite[Sections 7 and 8]{afw09}. In \cite[Theorems 7.4 and 8.4]{afw09}, the authors show that $E^{k, r, (-)}_{F, T^n}\alpha$ is the unique extension of $\alpha$ that satisfies a certain vanishing condition on the face $F^*$ that is ``opposite'' $F$. As above, one checks that if $\alpha'\in\cP_r^{(-)}\Lambda^k(F')$, then this vanishing condition for $E^{k, r, (-)}_{F', T^n}\alpha'$ on $F'^*$ implies the corresponding vanishing condition for $S_\pi^*E^{k, r, (-)}_{F', T^n}\alpha'$ with respect to $F^*$. Since this condition uniquely determines the extension, we conclude that $S_\pi^*E^{k, r, (-)}_{F', T^n}\alpha'$ is indeed the extension of $S_\pi^*\alpha$.

\bibliographystyle{siam}
\bibliography{fem}

\begin{thebibliography}{10}

\bibitem{a13}
{\sc D.~N. Arnold}, {\em Spaces of finite element differential forms}, in
  Analysis and Numerics of Partial Differential Equations, vol.~4 of Springer
  INdAM Ser., Springer, Milan, 2013, pp.~117--140.

\bibitem{afw06}
{\sc D.~N. Arnold, R.~S. Falk, and R.~Winther}, {\em Finite element exterior
  calculus, homological techniques, and applications}, Acta Numer., 15 (2006),
  pp.~1--155.

\bibitem{afw09}
\leavevmode\vrule height 2pt depth -1.6pt width 23pt, {\em Geometric
  decompositions and local bases for spaces of finite element differential
  forms}, Comput. Methods Appl. Mech. Engrg., 198 (2009), pp.~1660--1672.

\bibitem{afw10}
\leavevmode\vrule height 2pt depth -1.6pt width 23pt, {\em Finite element
  exterior calculus: from {H}odge theory to numerical stability}, Bull. Amer.
  Math. Soc. (N.S.), 47 (2010), pp.~281--354.

\bibitem{bk21feec}
{\sc Y.~Berchenko-Kogan}, {\em Duality in finite element exterior calculus and
  {H}odge duality on the sphere}, Found. Comput. Math., 21 (2021),
  pp.~1153--1180.
\newblock \url{https://rdcu.be/cdSpS}.

\bibitem{bdm85}
{\sc F.~Brezzi, J.~Douglas, Jr., and L.~D. Marini}, {\em Two families of mixed
  finite elements for second order elliptic problems}, Numer. Math., 47 (1985),
  pp.~217--235.

\bibitem{cure62}
{\sc C.~W. Curtis and I.~Reiner}, {\em Representation Theory of Finite Groups
  and Associative Algebras}, Pure and Applied Mathematics, Vol. XI,
  Interscience Publishers, 1962.

\bibitem{fuha91}
{\sc W.~Fulton and J.~Harris}, {\em Representation Theory: A First Course},
  vol.~129 of Graduate Texts in Mathematics, Springer-Verlag, New York, 1991.

\bibitem{hi03}
{\sc A.~N. Hirani}, {\em Discrete exterior calculus}, 2003.
\newblock Thesis (Ph.D.)--California Institute of Technology.

\bibitem{li19}
{\sc M.~W. Licht}, {\em Symmetry and invariant bases in finite element exterior
  calculus}, 2019.
\newblock \url{https://arxiv.org/abs/1912.11002}.

\bibitem{li18}
{\sc M.~W. Licht}, {\em On basis constructions in finite element exterior
  calculus}, Adv. Comput. Math., 48 (2022), pp.~Paper No. 14, 36.

\bibitem{n80}
{\sc J.-C. N{\'e}d{\'e}lec}, {\em Mixed finite elements in {$\mathbb{R}^{3}$}},
  Numer. Math., 35 (1980), pp.~315--341.

\bibitem{n86}
\leavevmode\vrule height 2pt depth -1.6pt width 23pt, {\em A new family of
  mixed finite elements in {$\mathbb{R}^{3}$}}, Numer. Math., 50 (1986),
  pp.~57--81.

\bibitem{rt77}
{\sc P.-A. Raviart and J.~M. Thomas}, {\em A mixed finite element method for
  2nd order elliptic problems},  (1977), pp.~292--315. Lecture Notes in Math.,
  Vol. 606.

\bibitem{se77}
{\sc J.-P. Serre}, {\em Linear Representations of Finite Groups}, Graduate
  Texts in Mathematics, Vol. 42, Springer-Verlag, New York-Heidelberg, 1977.
\newblock Translated from the second French edition by Leonard L. Scott.

\bibitem{wh57}
{\sc H.~Whitney}, {\em Geometric Integration Theory}, Princeton University
  Press, Princeton, N. J., 1957.

\end{thebibliography}
\end{document}